
\documentclass{amsproc}
\usepackage{hyperref}
\usepackage{amssymb}
\usepackage{amsrefs} 
\usepackage{mathrsfs} 

\newtheorem{theorem}{Theorem}[section]
\newtheorem{proposition}[theorem]{Proposition}
\newtheorem{lemma}[theorem]{Lemma}
\theoremstyle{remark}
\newtheorem{remark}[theorem]{Remark}

\numberwithin{equation}{section}


\DeclareMathOperator{\Aut}{Aut}
\DeclareMathOperator{\divisor}{div}
\DeclareMathOperator{\End}{End}
\DeclareMathOperator{\Jac}{Jac}
\DeclareMathOperator{\Norm}{Norm}
\DeclareMathOperator{\Tr}{Tr}

\newcommand{\CC}{{\mathbf{C}}}
\newcommand{\FF}{{\mathbf{F}}}
\newcommand{\PP}{{\mathbf{P}}}
\newcommand{\QQ}{{\mathbf{Q}}}
\newcommand{\ZZ}{{\mathbf{Z}}}

\newcommand{\scrC}{{\mathscr C}}
\newcommand{\scrL}{{\mathscr L}}
\newcommand{\scrO}{{\mathscr O}}

\newcommand{\kbar}{\overline{k}}
\newcommand{\qbar}{\overline{q}}
\newcommand{\rbar}{\overline{r}}
\newcommand{\xbar}{\overline{x}}
\newcommand{\zbar}{\overline{z}}
\newcommand{\betabar}{\overline{\beta}}
\newcommand{\pibar}{\overline{\pi}}

\newcommand{\Kp}{K^+}
\newcommand{\col}{\,{:}\,}

\newcommand{\Drinfeld}{Drinfel{\cprime}d}
\newcommand{\Vladut}{Vl\u adu\c t}

\newcommand\lowtilde{\lower0.7ex\hbox{\textasciitilde}}

\newcommand\us{\textunderscore}

\begin{document}

\title[Bounds for genus-$4$ curves]
{New bounds on the maximum number of points on genus-$4$
 curves over small finite fields}

\author{Everett W. Howe}
\address{Center for Communications Research,
         4320 Westerra Court,
         San Diego, CA 92121-1967, USA.}
\email{however@alumni.caltech.edu}
\urladdr{\href{http://www.alumni.caltech.edu/~however/}
          {http://www.alumni.caltech.edu/\lowtilde{}however/}}

\date{8 March 2012}
\dedicatory{Dedicated to the memory of my mother, Norma Howe}

\keywords{Curve, rational point, zeta function, finite field}

\subjclass[2010]{Primary 11G20; Secondary 14G05, 14G10, 14G15} 

\copyrightinfo{2012}{Institute for Defense Analyses}

\begin{abstract}
For prime powers $q<100$, we compute new upper and lower bounds 
on~$N_q(4)$, the maximal number of points on a genus-$4$ curve over a
finite field with $q$ elements.  We determine the exact value of 
$N_q(4)$ for $17$ prime powers $q$ for which the value was previously
unknown.
\end{abstract}

\maketitle

\section{Introduction}
\label{S:intro}

For every prime power $q$ and integer $g\ge 0$, let $N_q(g)$ denote
the maximal number of points on a curve of genus~$g$ over~$\FF_q$. In
this paper we compute new upper and lower bounds on $N_q(4)$ for prime
powers $q$ less than $100$, and we find the exact value of $N_q(4)$ for
$17$ prime powers $q$ for which the value was previously unknown.

Weil's famous `Riemann Hypothesis' for curves over finite 
fields~\cites{Weil1940,Weil1941,Weil1945,Weil1946} gives the
fundamental result that 
\[  N_q(g) \le q + 1 + 2g\sqrt{q},  \]
which generalizes Hasse's theorem for the number of points on an 
elliptic curve over a finite field~\cite{Hasse1936}.  Weil's bound was
improved significantly in the case where $g$ is large with respect to 
$q$ by Manin~\cite{Manin1981}, Ihara~\cite{Ihara1981}, and 
\Drinfeld\ and \Vladut~\cite{DrinfeldVladut1983}, and for fixed $q$ we
have the \Drinfeld-\Vladut\ bound
\[
   N_q(g)\leq (\sqrt{q}-1+o(1)) g \qquad\text{as $g \rightarrow \infty$.}
\]
When $q$ is a square this bound is asymptotically optimal, in the sense
that
\[ \limsup N_q(g)/g = \sqrt{q} - 1.\]
The value of this lim sup is not known for any nonsquare~$q$.

For any particular choice of $q$ and $g$, it can be a difficult
computational problem to determine the actual value of $N_q(g)$ and to
find a curve that attains this number of points. When $g$ is less than 
$(q - \sqrt{q})/2$, the best upper bound known is often Serre's 
improvement to the Weil bound~\cite{Serre1983a}:
\[ N_q(g) \le q + 1 + g \lfloor 2\sqrt{q}\rfloor. \]
But for every fixed genus $g>2$, nothing is known about the proportion
of prime powers for which this bound is attained.

For $g=1$ and $g=2$, the exact value of $N_q(g)$ can be calculated.
For $g=1$ this is due essentially to Deuring~\cite{Deuring1941}
(see~\cite{Waterhouse1969}*{Thm.~4.1, p.~536}), and for $g=2$ to
Serre~\cites{Serre1983a,Serre1983b,Serre1984} (see 
also~\cite{HoweNartRitzenthaler2009}).  But even for $g=3$ difficulties
arise, as is explained in~\cite{LRZ}.

The online tables available at \url{http://manypoints.org} collect the
best known upper and lower bounds on $N_q(g)$ for $g\le 50$ and for 
various values of $q$: the primes less than $100$, the prime powers 
$p^i$ for $p<20$ and $i\le 5$, and the powers of $2$ up to~$2^7$.  
Despite the lack of an explicit formula for $N_q(3)$, for all of the 
prime powers $q$ listed in these tables the value of $N_q(3)$ has been
calculated.  This leaves $N_q(4)$ as the next challenge. In this paper,
we give new upper and lower bounds for $N_q(4)$ for $22$ of the $23$ 
prime powers $q<100$ for which the exact value had not been previously
computed; we find the exact value of $N_q(4)$ for $17$ of these prime
powers.

Our results are given in Table~\ref{T:results}.  The entries in the
`old' column show the results that were listed on 
\href{http://manypoints.org}{\texttt{manypoints.org}} on 
1 August~2011.  Note that the entries in the `new' column show that
there are now only six values of $q$ less than $100$ for which the
exact value of $N_q(4)$ is not known.

\begin{table}
\renewcommand{\arraystretch}{1.25}
\begin{center}
\begin{tabular}{|r|r|r|r|r|r|r|r|r|r|r|}
\cline{1-3}\cline{5-7}\cline{9-11}
 $q$ & old & new &\qquad & $q$ & old & new & \qquad & $q$ & old & new \\
\cline{1-3}\cline{5-7}\cline{9-11}
 $2$ &            $8$ &            $8$ && $23$ &       --\,$58$  &            $57$  && $59$ &        --\,$118$ &            $116$ \\ 
 $3$ &           $12$ &           $12$ && $25$ &           $66$  &            $66$  && $61$ &        --\,$122$ & $118$\,--\,$119$ \\ 
 $4$ &           $15$ &           $15$ && $27$ &           $64$  &            $64$  && $64$ &            $129$ &            $129$ \\ 
 $5$ &           $18$ &           $18$ && $29$ &       --\,$70$  &  $67$\,--\,$68$  && $67$ &        --\,$132$ &            $129$ \\ 
 $7$ &           $24$ &           $24$ && $31$ &       --\,$73$  &            $72$  && $71$ & $132$\,--\,$136$ &            $134$ \\ 
 $8$ &           $25$ &           $25$ && $32$ & $71$\,--\,$72$  &  $71$\,--\,$72$  && $73$ &        --\,$139$ &            $138$ \\ 
 $9$ &           $30$ &           $30$ && $37$ &       --\,$84$  &            $82$  && $79$ &        --\,$148$ &            $148$ \\ 
$11$ & $33$\,--\,$34$ &           $33$ && $41$ &       --\,$90$  &            $88$  && $81$ &            $154$ &            $154$ \\ 
$13$ &       --\,$39$ &           $38$ && $43$ &       --\,$93$  &            $92$  && $83$ &        --\,$154$ &            $152$ \\ 
$16$ &           $45$ &           $45$ && $47$ &       --\,$100$ &            $98$  && $89$ &        --\,$162$ & $160$\,--\,$162$ \\ 
$17$ &       --\,$48$ &           $46$ && $49$ &       --\,$106$ & $102$\,--\,$106$ && $97$ &        --\,$174$ &            $174$ \\ 
\cline{9-11}
$19$ &       --\,$52$ & $48$\,--\,$50$ && $53$ &       --\,$110$ &            $108$ &  \multicolumn{4}{}{}                                        \\ 
\cline{1-3}\cline{5-7}
\end{tabular}
\end{center}
\vskip0.5em
\caption{Old and new ranges for $N_q(4)$, for $q<100$.}
\label{T:results}
\end{table}

We obtain our new upper bounds on $N_q(4)$ by using the results
of~\cite{HoweLauter2012}.  For each~$q$, we use the computer programs
associated with that paper to obtain restrictions on genus-$4$ curves 
over $\FF_q$ with many points.  Sometimes we learn that a curve with a
certain number of points must be a double cover of one of several
elliptic curves; in Section~\ref{S:covers} we show how to enumerate
such double covers.  Sometimes we learn that a curve with a certain 
number of points must correspond to a Hermitian lattice over a
quadratic order; we discuss these cases in Sections~\ref{S:maximal}
and~\ref{S:nonmaximal}. In two cases we find that the Jacobian of a 
curve with a certain number of points must have complex multiplication
by $\ZZ[\zeta_5]$; our analysis of Hermitian forms over this ring in 
Section~\ref{S:zeta} shows that such curves do not exist.  Our new
lower bounds come from explicit examples of curves, which we present
in Section~\ref{S:lower}.

We have implemented all of our calculations in the computer algebra
package Magma~\cite{magma}.  As we mentioned above, we discover
properties of genus-$4$ curves over $\FF_q$ with a given number of
points by using the programs associated with the
paper~\cite{HoweLauter2012}.  These programs are found in the package
\texttt{IsogenyClasses.magma}, which is available on the author's
website: Go to the bibliography page

\centerline{\href{http://alumni.caltech.edu/~however/biblio.html}
                {\texttt{http://alumni.caltech.edu/{\lowtilde}however/biblio.html}}
}

\noindent
and follow the link associated with the paper~\cite{HoweLauter2012}. 
The programs we use to enumerate double covers are also available
online, by starting at the URL given above and following the link 
associated with the present paper.

\section{Restrictions on genus-$4$ curves with many points}
\label{S:results}

In this section we present the results we obtained from running the
programs in \texttt{IsogenyClasses.magma}.

Table~\ref{T:upper} lists the $(q,N)$ pairs that we will have to
eliminate in order to prove our new upper bounds; for each $q$ and~$N$,
the table shows what \texttt{IsogenyClasses.magma} tells us about 
genus-$4$ curves over $\FF_q$ with $N$ points. An entry of 
``None exist'' means that \texttt{IsogenyClasses.magma} shows that no
genus-$4$ curve over $\FF_q$ has exactly $N$ points. Entries of the 
form ``Double cover of elliptic curve with trace $t$'' mean that any 
genus-$4$ curve over $\FF_q$ with $N$ points must be a double cover of
an elliptic curve over $\FF_q$ with trace $t$. An entry of
``Hermitian module over $R$'', where $R$ is an order in an imaginary 
quadratic field, means that any genus-$4$ curve over $\FF_q$ with $N$
points must have a Jacobian that is isogenous to the fourth power of
an ordinary elliptic curve over $\FF_q$ whose Frobenius endomorphism
generates the order $R$. An entry of 
``Hermitian module over $\ZZ[\zeta_5]$'' means that any genus-$4$ curve
over $\FF_q$ with $N$ points must have a Jacobian that is isogenous to
the square of an ordinary abelian surface  with complex multiplication
by the ring of integers of the $5$th cyclotomic field.

\begin{table}
\renewcommand{\arraystretch}{1.25}
\begin{center}
\begin{tabular}{|r|r|l|}
\hline
$q$  &   $N$ & Properties of a genus-$4$ curve over $\FF_q$ with $N$ points\\
\hline
$11$ &  $34$ & Hermitian module over $\ZZ[\zeta_5]$                        \\
$13$ &  $39$ & Double cover of elliptic curve with trace $-7$              \\
$17$ &  $48$ & Double cover of elliptic curve with trace $-8$              \\
$17$ &  $47$ & None exist                                                  \\
$19$ &  $52$ & Hermitian module over $\ZZ[\sqrt{-3}]$                      \\            
$19$ &  $51$ & None exist                                                  \\
$23$ &  $58$ & Double cover of elliptic curve with trace $-9$              \\
$29$ &  $70$ & Hermitian module over $\ZZ[2i]$                             \\          
$29$ &  $69$ & None exist                                                  \\
$31$ &  $73$ & Double cover of elliptic curve with trace $-11$             \\
$37$ &  $84$ & Double cover of elliptic curve with trace $-12$             \\
$37$ &  $83$ & None exist                                                  \\
$41$ &  $90$ & Hermitian module over $\ZZ[\sqrt{-5}]$                      \\           
$41$ &  $89$ & None exist                                                  \\
$43$ &  $93$ & Double cover of elliptic curve with trace $-13$             \\
$47$ & $100$ & Hermitian module over $\ZZ[\alpha]$                         \\
$47$ &  $99$ & None exist                                                  \\
$53$ & $110$ & Hermitian module over $\ZZ[2i]$                             \\          
$53$ & $109$ & None exist                                                  \\
$59$ & $118$ & Double cover of elliptic curve with trace $-15$             \\
$59$ & $117$ & Double cover of elliptic curve with trace $-15$             \\
$61$ & $122$ & Hermitian module over $\ZZ[\alpha]$                         \\
$61$ & $121$ & None exist                                                  \\
$61$ & $120$ & Double cover of elliptic curve with trace $-13$, or         \\
     &       & Hermitian module over $\ZZ[\zeta_5]$                        \\
$67$ & $132$ & Hermitian module over $\ZZ[\sqrt{-3}]$                      \\
$67$ & $131$ & None exist                                                  \\
$67$ & $130$ & Double cover of elliptic curve with trace $-14$             \\
$71$ & $136$ & Hermitian module over $\ZZ[\sqrt{-7}]$                      \\          
$71$ & $135$ & None exist                                                  \\
$73$ & $139$ & Double cover of elliptic curve with trace $-17$             \\
$83$ & $154$ & Double cover of elliptic curve with trace $-18$             \\
$83$ & $153$ & Double cover of elliptic curve with trace $-18$             \\
\hline
\end{tabular}
\end{center}
\vskip0.5em
\caption{What \texttt{IsogenyClasses.magma} tells us about genus-$4$
         curves over $\FF_q$ with $N$ points. Here $\zeta_5$ is a 
         primitive $5$th root of unity, $i = \sqrt{-1}$, and $\alpha$
         satisfies $\alpha^2 + \alpha + 5 = 0$.}
\label{T:upper}
\end{table}

\section{Double covers of elliptic curves}
\label{S:covers}

Table~\ref{T:upper} shows that there are a number of pairs $(q,N)$ such
that a genus-$4$ curve over $\FF_q$ with $N$ points must be a double 
cover of an elliptic curve with a certain trace~$t$.  Thus, one way to 
show that there are no genus-$4$ curves over $\FF_q$ with $N$ points
would be to enumerate all of the elliptic curve over $\FF_q$ of 
trace~$t$, enumerate all of the genus-$4$ double covers of these 
curves, count the number of points on the double covers, and verify 
that none of them has~$N$ points.  This is the strategy we use for the 
appropriate entries in Table~\ref{T:upper}.

In this section we explain more details of this procedure.  The actual
Magma programs we use can be found at the URL mentioned in the 
introduction; follow the link associated with this paper, and then 
download the file \texttt{Genus4.magma}.

Our first comment is that if $C$ is a double cover of an elliptic curve
$E$ over a finite field $k$, and if $\sigma$ is an automorphism of~$k$,
then $C^\sigma$ is a double cover of~$E^\sigma$.  Since $C^\sigma$ and 
$C$ have the same number of points, it will suffice for us to enumerate
all of the double covers of a set of representatives of the trace-$t$
elliptic curves up to automorphisms of~$k$.

The finite fields we must investigate are small enough that we use a 
completely naive method of finding representatives for these elliptic
curves.  We will only be working with fields of characteristic larger
than $3$, so every elliptic curve can be written in the form
$y^2 = x^3 + ax + b$, where $a$ and $b$ are elements of $k$ with
$4a^3 + 27b^2\ne 0$.  The curve determined by one such pair $(a,b)$ is 
isomorphic to a Galois conjugate of the curve determined by another
pair $(a',b')$ if and only if $(a',b') = (a^\sigma u^4, b^\sigma u^6)$
for some $u\in k^*$ and some automorphism $\sigma$ of~$k$.  The 
function \texttt{ECs} takes as input a finite field $k$ and a 
trace~$t$, explicitly computes the orbits of the set of $(a,b)$ pairs
under the combined action of $k^*$ and $\Aut k$, and computes the trace
of one representative elliptic curve from each orbit.  It returns those
representatives that have the desired trace~$t$.

Next, given an $E$ of trace~$t$, we must enumerate its genus-$4$ double
covers~$C$.  We use the same general idea as 
in~\cite{HoweLauter2003}*{\S6.1}; our description of the method is 
adapted from the version given there.

The function field of such a $C$ is obtained from that of $E$ by 
adjoining a root of $z^2 = f$, where $f$ is a function on $E$. By the
Riemann-Hurwitz formula, in order for $C$ to have genus $4$ the divisor
of $f$ must be of the form 
\[ P_1 + \cdots + P_6 + 2D, \]
where the $P_i$ are distinct geometric points on $E$ such that the
divisor $P_1 + \cdots + P_6$ is $k$-rational, and where $D$ is a
divisor of degree~$-3$.  There is a function $g$ on $E$ such that
\[ D + \divisor g = Q - 4\infty, \]
where $\infty$ is the infinite point on~$E$ and where $Q$ is a rational
point on~$E$. Replacing $f$ with $fg^2$ gives an isomorphic double 
cover of $E$. Thus, we may assume that $C$ is given by adjoining a root
of $z^2 = f$, where $f$ is a function on $E$ whose divisor is of the 
form 
\begin{equation}
\label{EQ:gooddiv}
P_1 + \cdots + P_6 + 2Q - 8\infty,
\text{\quad where the $P_i$ are distinct.
}
\end{equation}
  
We can also change the map $C\to E$ by following it with a translation
map on~$E$.  Translating  $E$ by a rational point $R$ has the effect of
replacing $f$ with a function whose divisor is 
\[ (P_1 + R) + \cdots + (P_6 + R) + 2(Q + R) - 8R \]
(where the sums in parentheses take place in the algebraic group~$E$).
By modifying this new $f$ by the square of a function we can get the 
divisor of $f$ to be 
\[ (P_1 + R) + \cdots + (P_6 + R) + 2(Q - 3R) - 8\infty. \]
We see that we only need to consider $Q$ that represent distinct 
classes of $E(k)$ modulo $3E(k)$.  Note that every class other than the
class of the identity element contains a representative that is not a
$2$-torsion point, so we may assume that $Q$ does not have order~$2$.
And finally, we note that we need only look at one representative from
each orbit of $\Aut E$ acting on~$E(k)/3E(k)$.  

Next, given an $E$ and a $Q\in E(k)$ not of order~$2$, we must 
enumerate all functions on $E$ with divisors of the
form~\eqref{EQ:gooddiv}.  Suppose first that $Q\ne\infty$.  By 
translating co\"ordinates on $E$, we may assume that $E$ is given by
an equation $y^2 = x^3 + r x^2 + s x + t^2$ and that $Q$ is the point
$(0,t)$.  Since $Q$ is not a $2$-torsion point we have $t\neq 0$, and
the tangent line to $E$ at $Q$ is given by $y = mx + t$, where 
$m = s/(2t)$.  Then there are two cases to consider: functions for
which none of the $P_i$ is equal to $\infty$, and functions for which
one of the $P_i$ is equal to~$\infty$.

If $f$ is a function on $E$ whose divisor has the desired form and for
which no $P_i$ is $\infty$, then $f$ has degree~$8$ and lies in the
Riemann-Roch space $\scrL(8\infty-2Q)$.  We check that this space is
spanned by the functions
\[ \{ x^4,\  x^2(y-t), \ x^3, \ x(y-t), \ x^2, \ y - mx - t \}. \]
We can then run through all of the $f$'s in the $k$-span of these 
functions, considering only those linear combinations where the 
coefficient of $x^4$ is nonzero (so that the function actually has 
degree~$8$).  In fact, since $z^2 = f$ and $z^2 = d^2 f$ give 
isomorphic extensions of $E$, we can restrict our attention to linear
combinations where the coefficient of $x^4$ is either $1$ or a fixed
nonsquare value.

For a given linear combination $f$, we can easily compute the number of
points on the extension $z^2 = f$ under the assumption that the divisor
of $f$ is of the form~\eqref{EQ:gooddiv}. (We get either $2$ or $0$ 
points over $\infty$ depending on the leading term of the Laurent 
expansion of $f$ at $\infty$, and likewise for the points over $Q$;
for the other points $P$ of $E(k)$, we get either $0$, $1$, or $2$ 
points over $P$ depending on whether $f(P)$ is a nonsquare, zero, or a
nonzero square.) If the number we calculate is larger than our previous
best count, we can then check whether the divisor of $f$ is in fact of
the form~\eqref{EQ:gooddiv}.

The case where one of the $P_i$ is equal to $\infty$ is very similar;
the difference is that we now look in the Riemann-Roch space
$\scrL(7\infty-2Q)$, and that now the extension $z^2=f$ always has 
exactly one point lying over $\infty$.

The function \texttt{double\us{}covers\us{}genus\us{}4} in the file 
\texttt{Genus4.magma} takes as input an elliptic curve $E$ over a
finite field, and runs the algorithm sketched above to find the largest
number of points on a genus-$4$ double cover of~$E$. The function 
\texttt{double\us{}covers\us{}given\us{}trace} takes as input a prime
power $q$ and a trace $t$, and finds the maximal number of points on a
genus-$4$ curve over $\FF_q$ that is a double cover of some elliptic
curve over $\FF_q$ of trace~$t$.

For each pair $(q,N)$ whose associated entry in Table~\ref{T:upper}
says that a genus-$4$ curve over $\FF_q$ with $N$ points must be a
double cover of an elliptic curve of trace $t$, we ran our program with
input $(q,t)$.  For each pair, we found that the maximal number of
points on a genus-$4$ double cover of an elliptic curve with trace $t$
is less than $N$.  Thus, for these pairs $(q,N)$, we find that
$N_q(4) < N$.

\section{Hermitian forms over maximal quadratic orders}
\label{S:maximal}

The programs in \texttt{IsogenyClasses.magma} show that for several of
the $(q,N)$ pairs listed in Table~\ref{T:upper}, every genus-$4$ curve
over $\FF_q$ having $N$ points must have Jacobian isogenous to~$E^4$,
where $E$ is an ordinary elliptic curve over $\FF_q$ whose Frobenius
endomorphism generates a specific imaginary quadratic order~$R$.  The
study of abelian varieties isogenous to $E^4$ is simplified by the use
of Serre's `Hermitian modules'~\cite{LauterSerre2002}*{Appendix} or
Deligne's equivalence of categories~\cite{Deligne1969} between ordinary
abelian varieties and modules over a certain ring, combined with the
description of polarizations on these modules provided 
in~\cite{Howe1995}.

In this section we analyze the three cases from Table~\ref{T:upper}
where the quadratic order in question is maximal; these are the cases
$(q,N) = (41,90)$, $(q,N) = (47,100)$, and $(q,N) = (61,122)$, where
the corresponding quadratic order $\scrO$ has discriminant $-20$, 
$-19$, and $-19$, respectively.  As is explained 
in ~\cite{LauterSerre2002}*{Appendix}, a principally-polarized abelian
variety isogenous to $E^4$ corresponds to a principally-polarized 
Hermitian $\scrO$-module of rank~$4$. Schiemann~\cite{Schiemann1998}
has computed all such principally-polarized Hermitian modules (up to
isomorphism) for the quadratic orders we are concerned with;  the lists
can be found at 

\centerline{\url{http://www.math.uni-sb.de/ag/schulze/Hermitian-lattices/}}

\noindent
as well as on the author's web site, mentioned in the introduction.

Schiemann calculated the automorphism groups of these polarized 
Hermitian modules; these groups are isomorphic to the automorphism
groups of the corresponding polarized abelian four-folds. Schiemann's
tables show that for all of the Hermitian modules we must consider, the
automorphism groups have order divisible by~$4$.  By Torelli's 
theorem~\cite{Milne1986}*{Thm.~12.1, p.~202}, if the polarized Jacobian
of a curve $C$ has an automorphism group of order divisible by~$4$,  
then either
\begin{itemize}
\item $C$ has an automorphism $\iota$ of order~$2$ such that the
      quotient curve $C_0=C/\langle\iota\rangle$ has positive genus, or
\item $C$ is a hyperelliptic curve with an automorphism of order~$4$
      whose square is the hyperelliptic involution.
\end{itemize}      
For the $(q,N)$ pairs we have to consider, we find that if $C$ is a
genus-$4$ curve over $\FF_q$ with $N$ points, then either $C$ is a
double cover of an elliptic curve isogenous to~$E$, or $C$ is a double
cover of a genus-$2$ curve whose Jacobian is isogenous to~$E^2$, or $C$
is a hyperelliptic curve with an automorphism whose square is the 
hyperelliptic involution.

We have already seen how to enumerate the genus-$4$ double covers of an 
elliptic curve; running \texttt{double\us{}covers\us{}given\us{}trace}
for the $(q,t)$ pairs coming from our $(q,N)$ pairs, we find no
genus-$4$ curves over $\FF_q$ with $N$ points.
In Section~\ref{SS:genus2} below we will show how to enumerate the 
genus-$4$ double covers of genus-$2$ curves, and in
Section~\ref{SS:hyperelliptic} we will show how to enumerate the genus-$4$
hyperelliptic curves with an automorphism whose square is the hyperelliptic involution.
For our $(q,N)$ pairs, we find no genus-$4$ curves over $\FF_q$ having $N$ points.

\subsection{Double covers of genus-$2$ curves}
\label{SS:genus2}
Suppose $\varphi:C\to C_0$ is a degree-$2$ map from a genus-$4$ curve
to a genus-$2$ curve over a finite field $k$ of characteristic greater
than~$2$.  The function field of $C$ is obtained from that of $C_0$ by
adjoining a root of $z^2 = f$, where $f$ is a function on $C_0$. By the
Riemann-Hurwitz formula, in order for $C$ to have genus $4$ the divisor
of $f$ must be of the form 
\[ P_1 + P_2 + 2D, \]
where the $P_i$ are distinct geometric points on $E$ such that the
divisor $P_1 + P_2$ is $k$-rational, and where $D$ is a divisor of 
degree~$-1$.  Let $\infty$ be any rational point on~$C_0$. The 
Riemann-Roch theorem shows that the Riemann-Roch space 
$\scrL(D + 4\infty)$ has dimension $2$; it follows that it must contain
a function $g$ such that $\divisor g + D + 3\infty$ is effective. Then
\[\divisor fg^2 = P_1 + P_2 + 2D' - 6\infty\]
for some effective divisor $D'$ of degree~$2$.  Replacing $f$ 
with~$fg^2$, we find that $C$ can be obtained from $C_0$ by adjoining a 
root of $z^2 = f$, with $\divisor f = P_1 + P_2 + 2D' - 6\infty$ for 
some effective $D'$ of degree~$2$.

Thus, to enumerate all genus-$4$ curves that are double covers 
of~$C_0$, we can simply enumerate all effective degree-$2$ 
divisors~$D'$, compute the Riemann-Roch space $\scrL(6\infty - 2D')$,
loop through the function $f$ in this space (up to squares in~$k$) that
are not squares in $\kbar(C_0)$, and consider the curves $z^2 = f$.  It
is easy to count the number of points on a curve defined by such an 
extension.  We implement this algorithm in the function 
\texttt{double\us{}covers\us{}genus\us{}4}, the same function we used
for doubles covers of elliptic curves;  when the input to the function
is a curve of genus~$2$, the function runs through the procedure
sketched above, and outputs the largest number of points it finds.  

For the case $(q,N) = (41,90)$ there are three possibilities for the 
base curve~$C_0$; this can be determined by brute force (by enumerating
all genus-$2$ curves over~$k$) or by theory, by noting that Schiemann's
tables show exactly three unimodular rank-$2$ Hermitian $\scrO$-modules
that are not products of two rank-$1$ modules.  The three curves are
\begin{align*}
y^2 &= x^6 + 7 x^4 + 8 x^2 - 7,\\
y^2 &= x^6 + 7 x^4 + 3 x^2 + 7, \quad\textup{and}\\
y^2 &= x^6 - 3 x^4 - 3 x^2 + 1.
\end{align*}
Running \texttt{double\us{}covers\us{}genus\us{}4} on these curves, we
find no genus-$4$ curves over $\FF_{41}$ with $90$ points.
Similarly, we find one possible $C_0$ for each of the other two $(q,N)$
pairs we must consider.  
For~$\FF_{47}$ and $\FF_{61}$, these curves are
\[y^2 = x^6 + 7 x^4 - 9 x^2 - 6 \quad\textup{and}\quad
y^2 = x^6 + 10 x^4 - 11 x^2 - 1,\]
respectively.
Running \texttt{double\us{}covers\us{}genus\us{}4} on these curves, we
find no genus-$4$ curves over $\FF_{47}$ with $100$ points, and no
genus-$4$ curves over $\FF_{61}$ with $122$ points.

\subsection{Hyperelliptic curves with automorphisms of order $4$}
\label{SS:hyperelliptic}

A hyperelliptic curve over $\FF_q$ can have at most $2(q+1)$ rational
points, so the only one of our three $(q,N)$ pairs that we need be
concerned with is $(q,N) = (61,122)$.

Suppose $C$ is a genus-$4$ hyperelliptic curve over a finite field $k$
with an automorphism $\alpha$ of order~$4$ whose square is the 
hyperelliptic involution. (In fact,~\cite{GuralnickHowe2009}*{Thm.~1.1}
shows that \emph{every} order-$4$ automorphism of a genus-$4$ 
hyperelliptic curve has square equal to the hyperelliptic involution.)
We can choose a parameter for $\PP^1$ so that the automorphism of 
$\PP^1$ induced by $\alpha$ is $x \mapsto a/x$ for some~$a\in k^*$.

Writing $C$ as $y^2 = f$ for some separable polynomial $f\in k[x]$ of
degree $9$ or~$10$, we see that the automorphism $\alpha$ must have the
form $(x,y) \mapsto (a/x, b y / x^5)$, and since $\alpha^2$ is the
hyperelliptic involution we must have $b^2 = -a^5$.  Replacing $x$ with
$a^2 x / b$, we compute that $\alpha$ is given by
$(x,y) \mapsto (-1/x,  y / x^5)$.

We find that $f$ must be a linear combination of the following 
polynomials:
\[ \{ x^{10} + 1  , \ 
      x^9    - x  , \ 
      x^8    + x^2, \ 
      x^7    - x^3, \
      x^6    + x^4  \}.\]
For the one pair $(q,N)$ we must consider, we can examine all linear
combinations of these polynomials (up to squares in~$\FF_q$), and
compute the number of points on the associated hyperelliptic curves.
We find no curve over $\FF_{61}$ having $122$ points.

The function \texttt{hyperelliptic} in the file \texttt{Genus4.magma}
implements this algorithm.

\section{Hermitian forms over nonmaximal quadratic orders}
\label{S:nonmaximal}

For five of the entries $(q,N)$ in Table~\ref{T:upper}, we see that the 
Jacobian of a genus-$4$ curve over $\FF_q$ with $N$ points must be 
isogenous to~$E^4$, where $E$ lies in an isogeny class of ordinary
elliptic curves over $\FF_q$ all of whose elements have complex 
multiplication by a non-maximal order $R$ in a quadratic field~$K$; 
here $R$ is the ring generated over $\ZZ$ by the Frobenius 
endomorphism of~$E$. The five entries are $(19, 52)$, $(29, 70)$, 
$(53, 110)$, $(67, 132)$, and $(71, 136)$, and the rings $R$ are the 
orders of conductor~$2$ inside the maximal orders of, respectively, 
$\QQ(\sqrt{-3})$, $\QQ(i)$, $\QQ(i)$, $\QQ(\sqrt{-3})$, and 
$\QQ(\sqrt{-7})$.  The discriminants of the rings $R$ are, 
respectively, $-12, -16, -16, -12,$ and~$-28$. We will show that for
each of these pairs $(q,N)$, a genus-$4$ curve over $\FF_q$ having $N$
points must be a double cover of a curve of genus $1$ or~$2$.

\begin{proposition}
\label{P:nonmaximal}
Let $t$ be the trace of Frobenius for an isogeny class $\scrC$ of
ordinary elliptic curves over a finite field~$\FF_q$ such that 
$\Delta := t^2 - 4q$ lies in $\{-12,-16,-28\}.$  Suppose $C$ is a
genus-$4$ curve over $\FF_q$ whose Jacobian is isogenous to the fourth
power of an elliptic curve in $\scrC$.
\begin{enumerate}
\item If $\Delta=-12$ or $\Delta=-16$ then $C$ is a double cover of a 
curve in~$\scrC$.
\item If $\Delta=-28$ then either $C$ is a double cover of a curve
in~$\scrC$, or $C$ is a double cover of a genus-$2$ curve whose 
Jacobian is isogenous to the square of a curve in~$\scrC$.
\end{enumerate}
\end{proposition}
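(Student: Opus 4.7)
The plan is to imitate the Hermitian-module-plus-Torelli strategy of Section~\ref{S:maximal}, adapted to the non-maximal order setting. By Deligne's equivalence of categories for ordinary abelian varieties combined with Howe's theory of polarizations of Deligne modules \cite{Howe1995}, principally polarized abelian fourfolds over $\FF_q$ isogenous to $E^4$ correspond to isomorphism classes of principally polarized Hermitian $R$-modules of rank $4$, where $R$ is the order generated by the Frobenius of $E$. The novel complication compared to Section~\ref{S:maximal} is that $R$ is non-maximal, so Schiemann's precomputed tables of $\scrO_K$-lattices do not suffice directly.

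First I would enumerate the relevant Hermitian $R$-modules. Each such $M$ sits as an $R$-submodule inside its $\scrO_K$-saturation $\widetilde{M}:=M\otimes_R \scrO_K$, a Hermitian $\scrO_K$-lattice of rank $4$, and is determined from $\widetilde{M}$ by a finite amount of extra data at the conductor prime $2$---explicitly, an appropriate $\FF_2$-subspace of $\widetilde{M}/2\widetilde{M}$ chosen so that the Frobenius generates exactly $R$ and not $\scrO_K$. Starting from Schiemann's enumerations for $\scrO_K=\ZZ[\zeta_3]$, $\ZZ[i]$, and $\ZZ[(1+\sqrt{-7})/2]$, this reduces the classification of the required $(M,\varphi)$ to a finite, mechanical case analysis.

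The crucial step is to verify that every $(M,\varphi)$ on the resulting list admits a nontrivial orthogonal decomposition $M=M_1 \perp M_2$ of rank $(1,3)$ when $\Delta\in\{-12,-16\}$, and of rank $(1,3)$ or $(2,2)$ when $\Delta=-28$. Any such decomposition supplies the involution $(x,y)\mapsto(-x,y)$, which together with $-\mathrm{id}$ generates a Klein four subgroup of $\Aut(M,\varphi)$, so $|\Aut(J(C),\lambda)|$ is divisible by $4$. Torelli's theorem, as recalled in Section~\ref{S:maximal}, then gives that either $C$ admits a nontrivial involution $\iota$ with $C/\langle\iota\rangle$ of positive genus, or $C$ is hyperelliptic with an order-$4$ automorphism whose square is the hyperelliptic involution; in the former case, the Deligne module of $C/\langle\iota\rangle$ is exactly the summand $M_1$, so a rank-$1$ summand yields a double cover of a curve in $\scrC$ (part (1) and the first alternative of part (2)) and a rank-$2$ summand (only for $\Delta=-28$) yields a double cover of a genus-$2$ curve with Jacobian isogenous to $E^2$ (the second alternative of part (2)).

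I expect the main obstacles to be the enumeration of Hermitian modules over these three non-maximal orders, for which no published tables are available, and the disposal of the hyperelliptic-with-order-$4$-automorphism branch of Torelli. For the latter I would use the observation that the Klein four subgroup generated by $-\mathrm{id}$ and the splitting involution $\tau$ contains the two involutions $\tau$ and $-\tau$, neither of which acts as $-\mathrm{id}$ on $M$; since in the hyperelliptic case the hyperelliptic involution is the unique element of $\Aut(J,\lambda)$ acting as $-\mathrm{id}$, both $\tau$ and $-\tau$ correspond to non-hyperelliptic involutions of $C$ with positive-genus quotient, returning us to the non-hyperelliptic branch of the dichotomy and to the conclusion read off from the rank of the summand.
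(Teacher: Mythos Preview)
Your plan diverges from the paper's in two essential ways, and one of them leaves a genuine gap.

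\textbf{How the paper actually argues.} The paper does \emph{not} classify rank-$4$ unimodular Hermitian $R$-modules, nor does it search for orthogonal splittings. Instead, a theorem of Borevich--Faddeev (Lemma~\ref{L:products}) shows that every abelian fourfold in the isogeny class is isomorphic to $E^i\times F^{4-i}$, where $\End E\cong\scrO$ and $\End F\cong R$. For $i>0$ the paper applies the short-vector bound \texttt{pullback\_bound} from~\cite{HoweLauter2012} on the \emph{maximal}-order side: there is always an embedding $E\hookrightarrow \Jac C$ whose pulled-back polarization has degree at most~$4$, and \cite{HoweLauter2012}*{Lem.~4.3} converts this directly into a degree-$2$ map $C\to E$---no Torelli, no decomposition (Lemma~\ref{L:pullback}). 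For $i=0$ one pulls the polarization on $F^4$ back along the degree-$16$ isogeny $E^4\to F^4$; it becomes twice a principal polarization on $E^4$, so Schiemann's tables over $\scrO$ (not $R$) classify the possibilities, and one pushes forward through the finitely many admissible isotropic subgroups of $E^4[2]$. For $\Delta\in\{-12,-16\}$ every resulting Hermitian form on $R^4$ again has a vector of length $\le 2$, yielding $C\to F$ of degree~$2$ by the same lemma. Only for $\Delta=-28$ do three exceptional forms $Q_1,Q_2,Q_3$ survive the short-vector test, and only for these does the paper exhibit an \emph{involution} with $2$-dimensional fixed locus and invoke Torelli. So the Torelli mechanism you propose as the backbone is, in the paper, a fallback used for exactly three lattices.

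\textbf{The gap.} Your crucial step is that each $(M,\varphi)$ admits an orthogonal \emph{module} decomposition of rank $(1,3)$ or $(2,2)$. For the three exceptional $Q_i$ this is doubtful: their minimum is at least~$3$, so no unimodular rank-$1$ summand exists, and the involution $A$ the paper writes down (swapping coordinates $1\leftrightarrow2$, $3\leftrightarrow4$) does \emph{not} arise from a $(2,2)$ module splitting---its $(\pm1)$-eigenlattices sum only to a proper finite-index sublattice of $R^4$. What the Torelli step actually needs is merely an involution of $(M,\varphi)$ other than $\pm1$ with fixed locus of dimension $1$, $2$, or $3$, not an orthogonal direct-sum decomposition of $M$. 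You should weaken your hypothesis to that and then do the verification; as stated, the claim you plan to ``verify'' may well be false. Your hyperelliptic paragraph also needs tightening: with a $(1,3)$-type involution $\tau$, one of $\tau,-\tau$ would force an involution of $C$ with genus-$3$ quotient, which Riemann--Hurwitz forbids for a degree-$2$ cover of a genus-$4$ curve; this \emph{excludes} the hyperelliptic branch outright rather than ``returning'' you to the other one.
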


For our five $(q,N)$ pairs, the function
\texttt{double\us{}covers\us{}given\us{}trace} finds no genus-$4$ curve
with $N$ points.  For the pair $(q,N) = (71, 136)$ we must also run
\texttt{double\us{}covers\us{}genus\us{}4} on three genus-$2$ curves:
\begin{align*}
y^2 &= x^6 - 29 x^4 - 29 x^2 + 1, \\
y^2 &= x^6 - 13 x^4 - 13 x^2 + 1, \quad\textup{and}\\
y^2 &= x^6 - 12 x^4 - 15 x^2 - 1.
\end{align*}
Again, we find no genus-$4$ curve with $N$ points.

The proof of Proposition~\ref{P:nonmaximal} relies on several lemmas.
To begin, we show that for each of our five cases, there are only four
abelian varieties to consider.

\begin{lemma}
\label{L:products}
Let $t$ be the trace of Frobenius for an isogeny class $\scrC$ of
ordinary elliptic curves over a finite field~$\FF_q$ such that 
$\Delta := t^2 - 4q$ lies in $\{-12,-16,-28\}.$  Then $\scrC$ contains
exactly two elliptic curves, one of them with endomorphism ring equal
to the maximal order $\scrO$ in $K:=\QQ(\sqrt{\Delta})$, and one with 
endomorphism ring equal to the order $R$ of conductor $2$ in $\scrO$.

Let $E$ and $F$ be these two elliptic curves, where $E$ has the larger
endomorphism ring.  Then every abelian variety over $\FF_q$ isogenous
to $E^n$ is isomorphic to $E^i \times F^{n-i}$ for some $i$, and the
varieties arising from distinct values of $i$ are not isomorphic to one
another.
\end{lemma}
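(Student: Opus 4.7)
My plan is to prove the two assertions separately: the first via Deuring's theorem and a class-number check; the second via Deligne's equivalence, followed by a structural classification of lattices over $R$.

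For the first assertion I would apply Deuring's theorem in the form given by~\cite{Waterhouse1969}*{Thm.~4.5}: the $\FF_q$-isomorphism classes of ordinary elliptic curves in the isogeny class of $\pi$ correspond to pairs $(R'',[\mathfrak{a}])$, where $R''$ is an order with $\ZZ[\pi] \subseteq R'' \subseteq \scrO$ and $[\mathfrak{a}] \in \mathrm{Pic}(R'')$.  Since $\ZZ[\pi]$ has discriminant $\Delta$, it equals $R$, the suborder of conductor $2$, and the only orders between $R$ and $\scrO$ are $R$ and $\scrO$ themselves.  A direct class-number check (or standard tables) shows $h(R) = h(\scrO) = 1$ in each of the three cases, so $\scrC$ contains exactly two curves: one with $\End = \scrO$ (call it $E$) and one with $\End = R$ (call it $F$).

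For the second assertion I would use Deligne's equivalence~\cite{Deligne1969} to identify the category of ordinary abelian varieties over $\FF_q$ isogenous to $E^n$ with the category of $R$-lattices in $K^n$, where $K = \QQ(\pi)$.  Under this correspondence $E$ and $F$ become the rank-$1$ lattices $\scrO$ and $R$, so the variety $E^i \times F^{n-i}$ becomes $\scrO^i \oplus R^{n-i}$.  The assertion then reduces to the algebraic claim: \emph{every $R$-lattice $T$ in $K^n$ is $R$-isomorphic to $\scrO^i \oplus R^{n-i}$ for a unique $i \in \{0,\ldots,n\}$.}  Uniqueness is immediate from the intrinsic formula $i = n - \dim_{\FF_2}((\scrO \cdot T)/T)$, which for $T = \scrO^i \oplus R^{n-i}$ follows from $(\scrO\cdot T)/T \cong (\scrO/R)^{n-i} \cong \FF_2^{n-i}$.

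For the existence half of the claim I would invoke the structure theory of lattices over Bass orders.  Every quadratic order is Gorenstein, and because the overorders of $R$ form the short chain $R \subsetneq \scrO$, the order $R$ is a Bass order.  A classical theorem (see~\cite{LevyWiegand1985} for a modern exposition) then asserts that every torsion-free $R$-module of finite rank is a direct sum of ideals of overorders of $R$, with the multiset of summands an invariant.  Since both $R$ and $\scrO$ have class number $1$, the only possible summands are $R$ and $\scrO$ themselves, and the decomposition follows.  The main technical obstacle would be the existence portion: if one wished to avoid invoking Bass-order theory, one would localize (the cases $\ell \ne 2$ being trivial since $R_\ell = \scrO_\ell$) and perform the classification of $R_2$-lattices of rank $n$ in $K_2^n$ by hand, the subtlest case being $\Delta = -16$, where $2$ is ramified in $\scrO$ and $\scrO_2/2\scrO_2 \cong \FF_2[t]/(t^2)$ is not semisimple.
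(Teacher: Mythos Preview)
Your argument is correct and follows essentially the same route as the paper: reduce via Deligne's equivalence to classifying rank-$n$ $R$-lattices in $K^n$, then invoke a structure theorem to decompose any such lattice as $\scrO^i\oplus R^{n-i}$ with $i$ unique. The only real difference is the reference for that structure theorem: the paper cites Borevich--Faddeev~\cite{BorevichFaddeev1965} (which treats exactly the case of orders whose integral closure is a cyclic extension, yielding both existence and uniqueness of the decomposition), whereas you invoke the more general Bass-order machinery and supply your own intrinsic formula $i = n - \dim_{\FF_2}((\scrO T)/T)$ for uniqueness. Either citation works here; the Borevich--Faddeev result is somewhat more direct, while your Bass-order framing makes clearer why the argument succeeds (every quadratic order is Gorenstein, and the chain of overorders is just $R\subsetneq\scrO$).
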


\begin{proof}
By Deligne's equivalence of categories~\cite{Deligne1969}, the elliptic
curves in $\scrC$ correspond to the isomorphism classes of nonzero
finitely-generated $R$-modules in~$K$.  The endomorphism ring of such a
module is either $R$ or $\scrO$, and since both $R$ and $\scrO$ have 
class number $1$, there is one elliptic curve $E$ with 
$\End E \cong \scrO$ and one elliptic curve $F$ with $\End F \cong R$.

Likewise, Deligne's theorem shows that the varieties isogenous to $E^n$
correspond to rank-$n$ modules over $R$.  By using a result of Borevich
and Faddeev~\cite{BorevichFaddeev1965}, we see that every such module
is isomorphic to a sum of copies of $\scrO$ plus a sum of copies 
of~$R$, and that two such modules are isomorphic if and only if they
have the same number of each type of summand.  The second statement of
the lemma follows upon applying Deligne's equivalence of categories.
\end{proof}

Next, we show that for each of our cases, any curve with Jacobian
isomorphic to $E^i \times F^{4-i}$ with $i>0$ must be a double cover
of~$E$.

\begin{lemma}
\label{L:pullback}
Let $t$ be the trace of Frobenius for an isogeny class of ordinary 
elliptic curves over a finite field~$\FF_q$ such that 
$\Delta := t^2 - 4q$ lies in $\{-12,-16,-28\},$ and let $E$ and $F$ be
as in Lemma~\textup{\ref{L:products}}.  If $C$ is a genus-$4$ curve
over $\FF_q$ whose Jacobian is isomorphic to $E^i\times F^{4-i}$ for
some $i>0$, then $C$ is a double cover of~$E$.
\end{lemma}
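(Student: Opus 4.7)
The plan is to use the decomposition of $\mathrm{Jac}(C)$ from Lemma~\ref{L:products} to produce a candidate morphism $\psi\col C\to E$ by composition with the Abel-Jacobi map, and then to show this morphism must have degree exactly $2$ by translating into the language of polarized Hermitian modules.

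First I would construct the map. Since $i>0$, the isomorphism $\mathrm{Jac}(C)\cong E^{i}\times F^{4-i}$ furnishes a surjection $\pi\col \mathrm{Jac}(C)\to E$. Fixing a rational base point (or working up to a twist) and composing $\pi$ with the Abel-Jacobi embedding $C\hookrightarrow \mathrm{Jac}(C)$ yields a morphism $\psi\col C\to E$ that is non-constant, because the image of $C$ generates $\mathrm{Jac}(C)$ and $\pi$ is surjective. Let $d=\deg\psi$. Because $g(C)=4>g(E)=1$, one has $d\ge 2$, so the problem is to establish $d\le 2$.

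Next I would translate $d$ into Hermitian-module data. The pullback $\psi^{*}\col E\to\mathrm{Jac}(C)$ is an isogeny onto its image with $\psi_{*}\circ\psi^{*}=[d]_{E}$, so the restriction of the principal polarization $\theta$ along $\psi^{*}$ equals $d$ times the principal polarization on $E$. Under the Serre/Deligne dictionary used in Sections~\ref{S:maximal}--\ref{S:nonmaximal}, the polarized abelian variety $(\mathrm{Jac}(C),\theta)$ corresponds to a rank-$4$ Hermitian $R$-module $(M,h)$ with $M\cong\mathscr{O}^{i}\oplus R^{4-i}$ as an $R$-module, and the existence of a degree-$d$ morphism $\psi\col C\to E$ corresponds to the existence of a primitive $\mathscr{O}$-linear embedding $\mathscr{O}\hookrightarrow M$ along which $h$ takes the value $d$. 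Since $i>0$ there is already an $\mathscr{O}$-summand in $M$, so an embedding exists with some value $n=h(v,v)>0$ with $n\in\mathbb{Z}$, and the minimum such $n$ equals the minimal degree of a non-constant morphism $C\to E$.

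The final step, which I expect to be the main obstacle, is showing that for each of the three orders $R$ in question the minimum value represented by $h$ on an $\mathscr{O}$-primitive vector is at most $2$. This is the part that genuinely uses the smallness of the discriminants $-12,-16,-28$: the possible principally polarized rank-$4$ Hermitian $R$-modules with an $\mathscr{O}$-summand can be enumerated (much as in Schiemann's tables, but over the non-maximal order~$R$), and in each case one checks the presence of a vector $v$ with $h(v,v)=2$. Equivalently, the units of $\mathscr{O}$ generate an automorphism of $\mathrm{Jac}(C)$ preserving $\theta$, and by Torelli this forces $C$ to carry an involution whose quotient has genus at most one and whose Jacobian is isogenous to a curve in $\scrC$; the quotient then has genus exactly one because any non-constant $\psi\col C\to E$ has $d\ge 2$. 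Either way, once the norm-$2$ vector is produced it corresponds via the Serre/Deligne dictionary to the desired degree-$2$ map $C\to E$, finishing the proof.
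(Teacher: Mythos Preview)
Your framework is correct and matches the paper's: the existence of a degree-$d$ map $C\to E$ corresponds to a vector of length $d$ in the Hermitian lattice attached to $(\Jac C,\theta)$, so the whole lemma reduces to producing such a vector of length at most~$2$. But your proposal does not actually establish this last step, and both of the routes you sketch for it have problems.

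The Torelli route is wrong. For $\Delta=-28$ the maximal order $\scrO=\ZZ\bigl[\tfrac{1+\sqrt{-7}}{2}\bigr]$ has unit group $\{\pm1\}$, so ``the units of $\scrO$'' give nothing beyond $[-1]$ and cannot force an extra involution on~$C$. For $\Delta=-12$ and $-16$ there are extra units in $\scrO$, but they act only on the $E$-factors of $E^i\times F^{4-i}$ (since $\End F\cong R\subsetneq\scrO$), and there is no reason a unit acting on one $E$-summand should preserve an arbitrary principal polarization $\theta$: the Hermitian form can and does mix the $E$- and $F$-summands. So you cannot manufacture a polarization-preserving automorphism this way.

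The enumeration route over the non-maximal order $R$ is merely asserted, and Schiemann's tables do not cover non-maximal orders. The paper avoids this difficulty by a different maneuver: rather than analyzing $(M,h)$ over $R$, it takes the degree-$2^{4-i}$ isogeny $E^4\to E^i\times F^{4-i}\cong\Jac C$ and pulls $\theta$ back to a polarization $\mu$ of degree $4^{4-i}$ on $E^4$. Now everything lives over the \emph{maximal} order $\scrO$ of discriminant $-3$, $-4$, or $-7$, where effective lattice-reduction bounds are available. A computational bound (the function \texttt{pullback\_bound} in \texttt{IsogenyClasses.magma}) then certifies that any rank-$4$ Hermitian form over $\scrO$ of determinant $2^{4-i}$ has a nonzero vector of length $1$ or $2$; this vector gives an embedding $E\hookrightarrow E^4$ along which $\mu$ pulls back to a polarization of degree $1$ or $4$, and hence a map $C\to E$ of degree $1$ or $2$. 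Degree $1$ being impossible, $C$ is a double cover of~$E$. The key idea you are missing is this passage from $R$ to $\scrO$ via the isogeny to $E^4$.
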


\begin{proof}
Our proof is computational, and uses the function 
\texttt{pullback\us{}bound} from the package 
\texttt{IsogenyClasses.magma}.  Given a fundamental discriminant $D$
with $-11\le D < 0$, an integer $n$, and a dimension~$d$, this function
will return an integer $B$ such that for every $d\times d$ Hermitian
matrix of determinant $n$ with entries in the quadratic order $\scrO_D$
of discriminant $D$, the Hermitian form on $\scrO_D^d$ determined by
this matrix will have a nonzero vector of length at most~$B$.  As is
explained in~\cite{HoweLauter2012}*{\S4}, this translates into a 
statement about polarizations on $E^d$, where $E$ has complex
multiplication by $\scrO_D$; namely, for every degree-$n^2$ 
polarization $\lambda$ on $E^d$, there is an embedding 
$\varphi\col E\to E^d$ such that $\varphi^*\lambda$ is a polarization
on $E$ of degree at most~$B^2$.

Now suppose $C$ is a curve with Jacobian isomorphic to 
$E^i\times F^{4-i}$, with $i>0$, and let $\lambda$ be the canonical
principal polarization on $\Jac C$.  There is an isogeny of degree
$2^{4-i}$ from $E^4$ to $\Jac C$, and pulling back $\lambda$ via this
isogeny gives a polarization $\mu$ of degree $4^{4-i}$ on $E^4$.  The
function \texttt{pullback\us{}bound} tells us that we can pull back
$\mu$ to a polarization of degree $1$ or $4$ on $E$, and 
by~\cite{HoweLauter2012}*{Lem.~4.3} this gives us a map of degree $1$
or $2$ from $C$ to $E$.  A map of degree~$1$ is clearly impossible, so
we find that $C$ is a double cover of~$E$.
\end{proof}

We see that to prove Proposition~\ref{P:nonmaximal} we need only
consider curves whose Jacobians are isomorphic to~$F^4$; in other
words, we need only consider principal polarizations on~$F^4$.  For
$\Delta = -12$ and $\Delta = -16$ we will show that every such 
polarization can be pulled back to either $E$ or $F$ to get a
polarization of degree $1$ or $4$.  For $\Delta = -28$ we will show
that if such a polarization cannot be pulled back to a polarization of
degree $1$ or $4$ on $E$ or~$F$, and if the polarized variety is the
Jacobian of a curve, then the curve has an involution that makes it a
double cover of a genus-$2$ curve.

Let $\varphi$ be a degree-$2$ isogeny from $E$ to $F$, and let 
$\Phi\col E^4 \to F^4$ be the degree-$16$ product isogeny 
$\varphi\times\varphi\times\varphi\times\varphi$.  Denote the kernel of
$\Phi$ by $G$.  Note that the smallest $\scrO$-stable subgroup of $E^4$
that contains $G$ is $E^4[2]$.

Suppose $\lambda$ is a principal polarization on $F^4$, and let
$\mu = \Phi^*\lambda$ be the pullback of $\lambda$ to $E^4$. Then $\mu$
is a polarization of degree $16^2$ on $E^4$, and since $\ker \mu$ is 
stable under the action of $\scrO$ and contains $G$, we must have 
$\ker\mu = E^4[2]$.  That means that $\mu$ must be twice a principal
polarization.

Thus, to enumerate the possible principal polarizations $\lambda$ 
on~$F^4$, we can enumerate the principal polarizations on $E^4$ up to
isomorphism, multiply each of these by $2$ to get a polarization $\mu$
of degree $16^2$ on $E^4$, enumerate the subgroups $G$ of $E^4[2]$ that
generate all of $E^4[2]$ as an $\scrO$-module and that are isotropic 
with respect to the Weil pairing on $E^4[2]$ determined by $\mu$, and 
then compute the matrix in $M_4(R)$ that represents the polarization we
get by pushing $\mu$ down through an isogeny with kernel $G$.  The 
Magma programs we use to do this can be found at the URL mentioned in 
the introduction; follow the link associated with this paper, and then 
download the file \texttt{NonMaximalOrders.magma}.

\subsection{Proof of Proposition~\ref{P:nonmaximal} when $\Delta=-12$}
\label{SS:Delta12}

Schiemann's tables~\cite{Schiemann1998} show that up to isomorphism the
only principal polarization on $E^4$ is the product polarization, so
our only $\mu$ is represented by the diagonal matrix with $2$'s on the
diagonal. Embedding $E$ into $E^4$ along one of the factors, we find
that~$\mu$, and hence~$\lambda$, can be pulled back to a polarization 
of degree $4$ on~$E$.

\subsection{Proof of Proposition~\ref{P:nonmaximal} when $\Delta=-16$}
\label{SS:Delta16}

Schiemann's tables show two principal polarizations on $E^4$, the 
product polarization and the polarization given by the Hermitian matrix
\[
P = \left[ \ 
\begin{matrix}
       2   &     1   &     0  &      0 \\
       1   &     2   &   1-i  &    1-i \\
       0   &   1+i   &     2  &      1 \\
       0   &   1+i   &     1  &      2 
\end{matrix}       
\ \right].
\]
Twice the product polarization can be pulled back to give a 
polarization of degree $4$ on $E$, so we need only consider the
polarization~$2P$.

A computer calculation shows that there are $1024$ subgroups of
$E^4[2]$ that are maximal isotropic with respect to the Weil pairing
determined by $P$ and that generate all of $E^4[2]$ as an 
$\scrO$-module.  For each such subgroup $G$, we compute the
polarization on $F^4$ obtained by pushing the polarization $2P$ down
through the isogeny with kernel $G$; this polarization can be 
represented by a Hermitian matrix in $M_4(R)$, which gives a Hermitian
form on $R^4$.  We can then compute the short vectors in this Hermitian
lattice.  We find that for each group $G$, the Hermitian lattice has a
short vector of length $2$.  This means that the polarization on $F^4$
can be pulled back to a polarization of degree $4$ on $F$.

\subsection{Proof of Proposition~\ref{P:nonmaximal} when $\Delta=-28$}
\label{SS:Delta28}

Schiemann's tables show that there are three principal polarizations
on $E^4$: the product polarization and the polarizations given by the 
Hermitian matrices
\begin{align*}
P_1 &= \left[ \ 
\begin{matrix}
 1  &             0            &              0            &        0                  \\
 0  &             2            &  \frac{1 + \sqrt{-7}}{2}  &       -1\phantom{-}                  \\
 0  & \frac{1 - \sqrt{-7}}{2}  &              2            &  \frac{1 + \sqrt{-7}}{2}  \\
 0  &       -1\phantom{-}      &  \frac{1 - \sqrt{-7}}{2}  &        2
\end{matrix}
\ \right]
\\
\intertext{and}
P_2 &= \left[ \ 
\begin{matrix}
             2            &             1            &              0            &  \frac{1 + \sqrt{-7}}{2}  \\
             1            &             2            &  \frac{1 + \sqrt{-7}}{2}  &  \frac{1 + \sqrt{-7}}{2}  \\
             0            & \frac{1 - \sqrt{-7}}{2}  &              2            &              1            \\
 \frac{1 - \sqrt{-7}}{2}  & \frac{1 - \sqrt{-7}}{2}  &              1            &              2            \\
\end{matrix}
\ \right].
\end{align*}

There is a $2$ on the diagonal of $2P_1$, so this polarization can be 
pulled back to a degree-$4$ polarization on $E$.  Likewise, twice the
product polarization can be pulled back to a degree-$4$ polarization 
on~$E$.  That leaves us to consider the polarization~$2P_2$. 

A computer calculation shows that there are $448$ subgroups of $E^4[2]$
that are maximal isotropic with respect to the Weil pairing determined
by $P_2$ and that generate all of $E^4[2]$ as an $\scrO$-module.  Of
these subgroups, $256$ give rise to principal polarizations on $F^4$
whose associated Hermitian forms have short vectors of length~$2$. The
other $192$ subgroups give principal polarizations on $F^4$ that are 
isomorphic to the polarizations defined by one of the following three 
Hermitian matrices:
\begin{align*}
Q_1 &=\left[ \ 
\begin{matrix}
                   4             &                   2             &         2             & -\sqrt{-7}\phantom{-} \\
                   2             &                   4             & -\sqrt{-7}\phantom{-} &         2             \\
                   2             & \phantom{-}\sqrt{-7}\phantom{-} &         4             &        -2\phantom{-}  \\
 \phantom{-}\sqrt{-7}\phantom{-} &                   2             &        -2\phantom{-}  &         4             \\
\end{matrix}\ \right]\displaybreak[0]\\
Q_2 &= \left[ \ 
\begin{matrix}
  3             &  1             &  1             & -1 - \sqrt{-7} \\
  1             &  3             & -1 - \sqrt{-7} &  1             \\
  1             & -1 + \sqrt{-7} &  4             & -2\phantom{-}  \\
 -1 + \sqrt{-7} &  1             & -2\phantom{-}  &  4             \\
\end{matrix}\ \right]\displaybreak[0]\\
Q_3 &= \left[ \ 
\begin{matrix}
                   3             &                   1             &         0             & -\sqrt{-7}\phantom{-} \\
                   1             &                   3             & -\sqrt{-7}\phantom{-} &         0             \\
                   0             & \phantom{-}\sqrt{-7}\phantom{-} &         3             &        -1\phantom{-}  \\
 \phantom{-}\sqrt{-7}\phantom{-} &                   0             &        -1\phantom{-}  &         3             \\
\end{matrix}\ \right] .
\end{align*}
We check that for each of these three polarizations $Q$ there is an
involution of the polarized variety $(F^4,Q)$ that fixes a 
$2$-dimensional subvariety of~$F^4$; such an involution can be 
represented by a matrix $A$ such that $A^2 = I$ and $A^* Q A = Q$, and
such that $A-I$ has rank~$2$. For each $Q$ we note that the matrix
\[
A = \left[ \ 
\begin{matrix}
 0 & 1 & 0 & 0 \\
 1 & 0 & 0 & 0 \\
 0 & 0 & 0 & 1 \\
 0 & 0 & 1 & 0 \\
\end{matrix}\ \right]
\]
has these properties. It follows from Torelli's 
theorem~\cite{Milne1986}*{Thm.~12.1, p.~202} that if $(F^4,Q)$ is the 
Jacobian of a curve $C$, then $C$ has an involution $\alpha$ such that
the quotient of $C$ by $\alpha$ is a genus-$2$ curve whose Jacobian is
necessarily isogenous to~$F^2$.

This completes the proof of Proposition~\ref{P:nonmaximal}.\qed
\section{Hermitian forms over $\ZZ[\zeta_5]$}
\label{S:zeta}

Let $\scrO = \ZZ[\zeta_5]$.  As Table~\ref{T:upper} indicates, the
programs in \texttt{IsogenyClasses.magma} show that if $C$ is a 
genus-$4$ curve over $\FF_{11}$ with $34$ points, the center of the
endomorphism ring of the Jacobian of $C$ is isomorphic to $\scrO$.  (At
the end of this section we will explain how to derive this statement
from the output of the programs.) Likewise, if $C$ is a genus-$4$ curve
over $\FF_{61}$ with $120$ points that is not a double cover of an
elliptic curve, then the center of the endomorphism ring of its
Jacobian is isomorphic to $\scrO$.  In this section, we show that no
ordinary genus-$4$ curve over a finite field can have $\scrO$ as the
center of the endomorphism ring of its Jacobian.

\begin{proposition}
\label{P:zeta5}
Let $k$ be a finite field.  There is no genus-$4$ curve over $k$ with
ordinary Jacobian $J$ such that the center of $\End J$ is isomorphic
to~$\ZZ[\zeta_5]$.
\end{proposition}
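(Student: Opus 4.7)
The plan is to exploit the order-$5$ automorphism of $J$ given by the central element $\zeta_5\in\ZZ[\zeta_5]\subset\End J$, transfer it to an automorphism of $C$ via Torelli's theorem, reduce the structure of $C$ via Riemann-Hurwitz, and derive a contradiction from two incompatible descriptions of its action on $H^0(C,\Omega^1_C)$.

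First, $\mathrm{char}\,k\ne 5$: because $5$ is totally ramified in $\ZZ[\zeta_5]$, no Weil number $\pi\in\ZZ[\zeta_5]$ with $\pi\bar\pi$ a power of $5$ can be a unit at the prime above~$5$, so $J$ cannot be ordinary in characteristic~$5$. Next, Honda-Tate together with the hypothesis that the center of $\End J$ is the single field $\QQ(\zeta_5)$ forces $J$ to be isogenous to $B^2$ for some simple ordinary abelian surface $B$ with $\End^0 B=\QQ(\zeta_5)$; consequently $\End^0 J\cong M_2(\QQ(\zeta_5))$. The central $\zeta_5$ then acts on $J$ as an order-$5$ automorphism $\phi$ corresponding to the scalar $\zeta_5\cdot I_2\in M_2(\QQ(\zeta_5))$. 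The Rosati involution restricts to complex conjugation on the center, so $\phi$ preserves the canonical principal polarization. By Torelli's theorem, combined with the fact that $5$ is odd to eliminate the $\pm 1$ ambiguity in the non-hyperelliptic case, $\phi$ arises as $\alpha^*$ for a unique $\alpha\in\Aut(C)$ of order exactly $5$.

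Applying Riemann-Hurwitz to $C\to C/\langle\alpha\rangle$ yields $6=10g'-10+4r$, forcing $g'=0$ and $r=4$. Thus, over $\kbar$, the curve $C$ is a cyclic degree-$5$ cover of $\PP^1$ branched at $4$ points, realized as $y^5=\prod_{i=1}^4(x-a_i)^{e_i}$ with $e_i\in\{1,2,3,4\}$ and $\sum e_i\equiv 0\pmod 5$. Up to permutation there are exactly seven exponent multisets: $\{1,1,1,2\}$, $\{1,1,4,4\}$, $\{1,2,3,4\}$, $\{1,3,3,3\}$, $\{2,2,2,4\}$, $\{2,2,3,3\}$, and $\{3,4,4,4\}$.

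The contradiction arises from two incompatible descriptions of $\alpha^*$ on $H^0(C,\Omega^1_C)=H^0(J,\Omega^1_J)$. On the one hand, after fixing an isogeny $J\sim B^2$ we have $H^0(J,\Omega)=H^0(B,\Omega)^2$, and the scalar $\phi=\zeta_5\cdot I_2$ acts by multiplication by $\zeta_5$ on each factor of $H^0(B,\Omega)$; since $B$ has CM by $\QQ(\zeta_5)$ with some CM type $\Phi\subset\{1,2,3,4\}$ of size~$2$, the dimension $d_k$ of the $\zeta_5^k$-eigenspace on $H^0(J,\Omega)$ equals $2$ for $k\in\Phi$ and $0$ otherwise, so every $d_k\in\{0,2\}$. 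On the other hand, since the cover is tame in characteristic $\ne 5$, the Chevalley-Weil formula gives
\[
  d_k \;=\; -1 + \sum_{i=1}^{4}\left\{\frac{-k e_i}{5}\right\},
\]
where $\{\cdot\}$ denotes the fractional part; a direct case-by-case computation over the seven multisets produces only the $d$-vectors $(1,1,1,1)$ and the $(\ZZ/5\ZZ)^\times$-orbit of $(0,1,1,2)$. Each such vector has at least one entry equal to $1$, contradicting $d_k\in\{0,2\}$, so no such $C$ exists. The main subtle step is the Torelli extraction of $\alpha$ with $\alpha^*=\phi$ on the nose; the rest of the argument is a finite enumeration together with the standard Chevalley-Weil formula.
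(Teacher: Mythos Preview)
Your approach is genuinely different from the paper's: instead of classifying rank-$2$ unimodular Hermitian lattices over $\ZZ[\zeta_5]$ (the paper's Lemma~\ref{L:zeta5}) to show that the principal polarization on $J$ must decompose, you extract an order-$5$ automorphism of $C$ via Torelli, realize $C$ as a cyclic degree-$5$ cover of $\PP^1$, and derive a contradiction from the Chevalley--Weil eigenspace decomposition of $H^0(C,\Omega^1)$. This is close in spirit to the paper's Remark~\ref{R:simpler}, which suggests enumerating such Kummer covers over the specific fields $\FF_{11}$ and $\FF_{61}$; your eigenvalue argument is the uniform version that handles all finite fields at once, and that is a real gain.

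There is, however, a genuine gap at the step ``after fixing an isogeny $J\sim B^2$ we have $H^0(J,\Omega)=H^0(B,\Omega)^2$, and $\phi=\zeta_5\cdot I_2$ acts diagonally.'' In positive characteristic an isogeny need not induce an isomorphism on differentials (inseparable isogenies kill part of $H^0(\Omega)$), and even granting separability you need $\zeta_5\in\End B$, not merely $\zeta_5\in\End^0 B$, for $\zeta_5$ to act on $H^0(B,\Omega)$ at all. The clean repair is exactly the first half of the paper's own proof: because the center of $\End J$ is the principal ideal domain $\scrO=\ZZ[\zeta_5]$, the Deligne module of the ordinary variety $J$ is a torsion-free $\scrO$-module of rank~$2$, hence free, so $J\cong A\times A$ for an abelian surface $A$ with $\End A=\scrO$. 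That gives $H^0(J,\Omega)=H^0(A,\Omega)^{\oplus 2}$ with the central $\zeta_5$ acting diagonally, so every $d_k$ is even, and your contradiction with the Chevalley--Weil vectors $(1,1,1,1)$ and the orbit of $(0,1,1,2)$ goes through. With that patch your argument legitimately replaces the reduction-theoretic Lemma~\ref{L:zeta5} by a short finite check; the trade-off is that the paper's route needs no CM-type or Chevalley--Weil input, while yours avoids the somewhat lengthy lattice-reduction argument.
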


\begin{proof}
Suppose, to obtain a contradiction, that such a curve $C$ exists.  Let
$K = \QQ(\zeta_5)$, let $\pi$ be the Frobenius endomorphism of~$J$, let
$\pibar$ be the Verschiebung endomorphism of $J$, and let $R$ be the
ring $\ZZ[\pi,\pibar]$.  Since $R$ lies in the center of $\End J$ we
may view $R$ as a subring of $\scrO$, and since $\pi$ generates the
center of $(\End J)\otimes\QQ$, we see that $R$ is an order in~$\scrO$.
Let $M$ be the Deligne module associated with $J$ 
(see~\cite{Howe1995}), so that $M$ is a finitely-generated $R$-module
that is isomorphic to a submodule of~$K^2$.  Since $\End J$ contains
$\scrO$, we see that $M$ is in fact an $\scrO$-module, and since 
$\scrO$ has class number $1$ we have $M \cong \scrO\oplus\scrO$.  
Translating this statement back from Deligne modules to ordinary 
abelian varieties, we see that $J\cong A\times A$ for a $2$-dimensional
abelian variety with $\End A \cong \scrO$.

The field $K$ is ramified over its real subfield at a finite prime, so
by~\cite{Howe1995}*{Cor.~11.4, p.~2391} the variety $A$ has a principal
polarization $\lambda$.   Let $\mu$ be the canonical polarization 
on~$J$, viewed as a polarization on $A\times A$.  Then $\mu$ is equal
to the product polarization $\lambda\times\lambda$ preceded by an
endomorphism $P$ of $A\times A$; viewing $P$ as an element of
$M_2(\scrO)$, we find that $P$ is a unimodular Hermitian matrix that is
totally positive (meaning that all of the roots of its minimal
polynomial are totally positive algebraic numbers).  But 
Lemma~\ref{L:zeta5} below says that all totally positive rank-$2$ 
unimodular Hermitian lattices over $\scrO$ are decomposable, so $\mu$
is isomorphic to the product polarization $\lambda\times\lambda$. 
This contradicts the fact that the polarization on a Jacobian is never
a product.
\end{proof}

\begin{lemma}
\label{L:zeta5}
Let $\scrO = \ZZ[\zeta_5]$.  Suppose $P$ is a totally positive 
unimodular Hermitian matrix in $M_2(\scrO)$.  Then there is an
invertible $C\in M_2(\scrO)$ such that $P = C^*C$, where $C^*$ is
the conjugate transpose of~$C$.
\end{lemma}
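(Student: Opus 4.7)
The plan is to show that $P$ represents the value $1$---that is, that some $v \in \scrO^2$ satisfies $v^*Pv = 1$---and then to recover $P = C^*C$ by a Gram--Schmidt reduction combined with the fact that every totally positive unit of $\scrO_0 := \ZZ[(1+\sqrt{5})/2]$ is a relative norm from $\scrO^\times$. Write $K = \QQ(\zeta_5)$ and $\Kp = \QQ(\sqrt{5})$. Given $v$ with $v^*Pv = 1$, primitivity is automatic: if $v = \pi w$ for some non-unit $\pi \in \scrO$, then $v^*Pv = \pi\bar\pi\cdot w^*Pw$ with $\pi\bar\pi$ a non-unit of $\scrO_0$, contradicting $v^*Pv = 1$. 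Hence $v$ extends to a basis of the free $\scrO$-module $\scrO^2$; in the new basis $P$ takes the form $\bigl(\begin{smallmatrix}1 & b\\ \bar b & c\end{smallmatrix}\bigr)$, and replacing the second basis vector by $e_2 - b\,e_1$ reduces $P$ to $\mathrm{diag}(1,d)$ with $d = \det P$ a totally positive unit of $\scrO_0$.

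The totally positive unit group of $\scrO_0$ is infinite cyclic, generated by $\epsilon^2$, where $\epsilon = (1+\sqrt{5})/2$ is the fundamental unit (of norm $-1$, hence not itself totally positive). Since $(1+\zeta_5)(1+\zeta_5^{-1}) = 2 + (\zeta_5 + \zeta_5^{-1}) = \epsilon^2$ and $\Norm_{K/\QQ}(1+\zeta_5) = \Phi_5(-1) = 1$, the element $1+\zeta_5$ is a unit in $\scrO$ whose relative norm generates the totally positive units of $\scrO_0$. Writing $d = u\bar u$ with $u \in \scrO^\times$ then gives $\mathrm{diag}(1,d) = C^*C$ for $C = \mathrm{diag}(1,u)$.

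The main step---and the only delicate one---is producing a $v$ with $v^*Pv = 1$. The approach is geometry of numbers applied to the rank-$8$ positive-definite $\ZZ$-lattice $(\scrO^2, \langle\cdot,\cdot\rangle)$ with $\langle v,w\rangle := \Tr_{K/\QQ}(v^*Pw)$. Using $|\mathrm{disc}(\scrO/\ZZ)| = 125$ together with $\Norm_{\Kp/\QQ}(\det P) = 1$ (as $\det P$ is a totally positive unit), the Gram matrix of this $\ZZ$-lattice has determinant $125^2$, and Hermite's inequality ($\gamma_8 = 2$) yields a nonzero $v$ with $\Tr_{K/\QQ}(v^*Pv) \leq 2\cdot(125^2)^{1/8} = 2\cdot 5^{3/4} < 7$. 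Since $\Tr_{K/\QQ} = 2\,\Tr_{\Kp/\QQ}$ on $\scrO_0$, this forces $\Tr_{\Kp/\QQ}(v^*Pv) \leq 3$. A short enumeration of totally positive $\alpha = a+b\epsilon \in \scrO_0$ subject to $2a+b \leq 3$ and $\alpha,\bar\alpha > 0$ shows that $\alpha \in \{1, \epsilon^2, \bar\epsilon^2\}$, each of which is a relative norm from $\scrO^\times$ (via $u = 1$, $u = 1+\zeta_5$, and $u = 1+\zeta_5^2$, using $(1+\zeta_5^2)(1+\zeta_5^{-2}) = 2 - \epsilon = \bar\epsilon^2$). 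Then $u^{-1}v \in \scrO^2$ represents $1$. The main obstacle is the tightness of Hermite's bound: that $2\cdot 5^{3/4} < 7$ is precisely what rules out non-unit values such as $\alpha = 2$ (trace $4$), and the arithmetic accident that every small totally positive element of $\scrO_0$ happens to be a norm from $\scrO^\times$ is what makes the argument close cleanly.
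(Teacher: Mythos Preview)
Your proof is correct and takes a genuinely different route from the paper's.

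The paper proceeds by an iterative Euclidean reduction: it first computes (via Magma) that the covering radius of the rank-$4$ lattice $(\scrO,\Tr_{K/\QQ}(x\bar x))$ equals~$2$, deduces a Euclidean division with remainder on $\scrO$ controlling both $\Norm_{K/\QQ}$ and the archimedean sizes, and then repeatedly applies unimodular congruences to shrink $\Norm_{\Kp/\QQ}(\alpha)$ for the upper-left entry $\alpha$ of $P$ until $\Norm_{\Kp/\QQ}(\alpha)\le 3$, which forces $\alpha=1$ and finishes. This is constructive and algorithmic, but it leans on a machine computation of the Voronoi cell and a somewhat delicate chain of inequalities.

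Your argument bypasses the iteration entirely by a single application of the Hermite constant $\gamma_8=2$ to the rank-$8$ $\ZZ$-lattice $(\scrO^2,\Tr_{K/\QQ}(v^*Pw))$, together with the arithmetic of totally positive units in $\scrO_0$. The key inputs you use --- $|d_K|=125$, $\gamma_8=2$, and the explicit relative norms $(1+\zeta_5)(1+\zeta_5^{-1})=\epsilon^2$ and $(1+\zeta_5^2)(1+\zeta_5^{-2})=\bar\epsilon^2$ --- are all classical, so no computer assistance is needed. The price is that the argument is tight: the bound $2\cdot 5^{3/4}\approx 6.69$ only just excludes the non-unit value $\alpha=2$ (which has $\Tr_{\Kp/\QQ}=4$, hence $\Tr_{K/\QQ}=8$), and the enumeration of totally positive $\alpha\in\scrO_0$ with $\Tr_{\Kp/\QQ}(\alpha)\le 3$ happens to yield only units. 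In exchange you get a shorter, cleaner, and computer-free proof of the same lemma.
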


\begin{proof}
Our proof follows the lines set out in~\cite{HoweLauter2003}*{\S8}.

Let $K = \QQ(\zeta_5)$, let $\varphi$ be a fundamental unit of the
maximal real subfield $\Kp = \QQ(\sqrt{5})$ of $K$ with 
$\Tr_{\Kp/\QQ} \varphi = 1$, and let $\phi$ be the real number 
$(1 + \sqrt{5})/2$.  Let $\psi_1$ and $\psi_2$ be embeddings of $K$ 
into the complex numbers $\CC$ with $\psi_1(\varphi) =\phi$ and
$\psi_2(\varphi) = 1/\phi$.  If $z$ is a complex number, we let $|z|$ 
be its magnitude and we let $||z||$ be its norm, so that 
$||z|| = |z|^2 = z \zbar$.

Let $q$ be the quadratic form on $\scrO$ that sends $x$ to the trace
from $K$ to $\QQ$ of~$x\xbar$, so that 
$q(x) = 2||\psi_1(x)|| + 2||\psi_2(x)||$.  Let $\Lambda$ be the lattice
$(\scrO,q)$.  Using Magma, we compute the Voronoi cell for this 
lattice, and we find that the covering radius of the lattice is~$2$;
that is, every element of $\Lambda\otimes\QQ$ differs from a lattice
point by an element $x$ with $q(x)\le 2$.  This means that every
element of $K$ differs from an element of $\scrO$ by an element $x$
satisfying
\[  ||\psi_1(x)|| + ||\psi_2(x)|| \le 1 . \]
In particular, we see that this $x$ also satisfies
\[  N_{K/\QQ}(x) \le 1/4 \text{\quad and \quad} ||\psi_i(x)|| \le 1
    \text{\quad for $i = 1,2$.} \]
In turn, this statement about the lattice $\Lambda$ gives us a
Euclidean algorithm on $\scrO$: Given elements $n$ and $d$ of~$\scrO$,
there are elements $q$ and $r$ of $\scrO$ such that $n = qd + r$ with
\[
        N(r) \le 1/4             \text{\quad and \quad } 
||\psi_i(r)|| \le ||\psi_i(n)||  \text{\quad for $i = 1,2 $}.
\]

Write our totally positive unimodular Hermitian matrix $P$ as
\[
P = 
\left[\ \begin{matrix}
  \alpha & \betabar \\
  \beta  & \gamma
\end{matrix}\ \right]
\]
where $\alpha$ and $\gamma$ are totally real and where $\alpha$ and 
$\alpha\gamma - \beta\betabar$ are totally positive.  We will 
repeatedly choose invertible matrices $C$ and replace $P$ with $C^*PC$
in order to reduce the size of the norm of the upper left element 
of~$P$.

The determinant of $P$ is a totally positive unit in $\Kp$, and so is
an even power of $\varphi$. By modifying $P$ by a matrix $C$ of the
form 
\[
\left[\ \begin{matrix}
  \varphi^i & 0 \\
     0      & 1
\end{matrix}\ \right]
\]
we may assume that $P$ has determinant~$1$. Then by modifying $P$ by a
power of the matrix 
\[
\left[\ \begin{matrix}
  \varphi & 0            \\
     0    & \varphi^{-1}
\end{matrix}\ \right]
\]
we can ensure that 
\[
\frac{1}{\phi^2} \le \frac{\psi_1(\alpha) }{ \psi_2(\alpha)} \le  \phi^2 . 
\]
Another way of expressing this is to say that 
\begin{equation} 
\label{EQ:alphabound} 
\frac{1}{\phi^2} \le \frac{\psi_i(\alpha)^2 }{\Norm_{\Kp/\QQ}(\alpha)}  
                 \le \phi^2 \quad\text{for $i = 1, 2$.} 
\end{equation}

Applying our Euclidean algorithm to $\beta$ and $\alpha$, we find that
$\beta = q \alpha + r$ for a $q$ and an~$r$ with 
$||\psi_i(r)|| \le ||\psi_i(\alpha)||$ and with 
$\Norm_{K/\QQ}(r) \le (1/4) \Norm_{K/\QQ}(\alpha)$.  If we set 
\[
C= 
\left[\ \begin{matrix}
  1 & -\qbar \\
  0 &  1
\end{matrix}\ \right]
\]
then 
\[
C^* P C  = 
\left[\ \begin{matrix}
  \alpha & \rbar\\
  r      & \gamma'
\end{matrix}\ \right]
\]
for some integer $\gamma'$ in $K^+$.   Replace $\beta$ with $r$ and 
$\gamma$ with~$\gamma'$, so that now we have
\begin{equation}
\label{EQ:betabound}
||\psi_i(\beta)|| \le ||\psi_i(\alpha)||  \quad\text{for $i=1,2$}
\end{equation}
and 
\begin{equation}
\label{EQ:betabound2}
\Norm_{K/\QQ}(\beta) \le (1/4) \Norm_{K/\QQ}(\alpha).
\end{equation}

Let $B = \beta \betabar$, so that $B$ is an integer in $\Kp$.  Note 
that we have $\alpha\gamma - B = 1$, so
\[
\psi_i(\alpha) \psi_i(\gamma) = 1 + \psi_i(B) \quad \text{for $i = 1,2 $}
\]
and therefore
\begin{equation} 
\label{EQ:gammabound}
\psi_i(\gamma) / \psi_i(\alpha) = 
   1/\psi_i(\alpha)^2 + \psi_i(B)/\psi_i(\alpha)^2 \text{\quad for $i = 1,2$.} 
\end{equation}

Now  let 
\begin{align*}
b_1 &= \psi_1(B) / \psi_1(\alpha^2) \\
b_2 &= \psi_2(B) / \psi_2(\alpha^2) \\
c_1 &= 1/\psi_1(\alpha^2)\\
c_2 &= 1/\psi_2(\alpha^2)
\end{align*}
so that equation~(\ref{EQ:gammabound}) becomes
\[
\psi_1(\gamma) / \psi_1(\alpha)  = b_1 + c_1 \text{\qquad and\qquad }
\psi_2(\gamma) / \psi_2(\alpha)  = b_2 + c_2.
\]
Multiplying these last two equalities gives
\begin{equation}
\label{EQ:gammaalphabound}
\Norm_{\Kp/\QQ}(\gamma/\alpha) = b_1 b_2 + b_1 c_2 + b_2 c_1 + c_1 c_2. 
\end{equation} 
Note that 
\begin{equation} 
\label{EQ:bbbound}
  b_1 b_2 = \frac{\Norm_{\Kp/\QQ}(B)}  {\Norm_{\Kp/\QQ}(\alpha)^2}
          = \frac{\Norm_{K/\QQ}(\beta)}{\Norm_{K/\QQ}(\alpha)} 
        \le \frac{1}{4}
\end{equation}
(where the final inequality comes from~(\ref{EQ:betabound2})) and 
\begin{equation}
\label{EQ:ccbound}
  c_1 c_2 = \frac{1}{\Norm_{\Kp/\QQ}(\alpha^2)}.
\end{equation}
Furthermore, from inequality~(\ref{EQ:alphabound}) we see that 
\begin{equation} 
\label{EQ:cbound} 
c_1  \le \frac{\phi^2}{\Norm_{\Kp/\QQ}(\alpha)} \qquad\text{and}\qquad 
c_2  \le \frac{\phi^2}{\Norm_{\Kp/\QQ}(\alpha)},
\end{equation}
and from inequality~(\ref{EQ:betabound}) we see that 
\begin{equation} 
\label{EQ:bbound} 
b_1 = \frac{||\psi_1(\beta)||}{||\psi_1(\alpha)||} \le 1 
\qquad\text{and}\qquad
b_2 = \frac{||\psi_2(\beta)||}{||\psi_2(\alpha)||} \le 1. 
\end{equation}

If we view $b_1$, $b_2$, $c_1$, and $c_2$ as non-negative real 
variables subject only to the conditions expressed in 
relations~(\ref{EQ:bbbound}), (\ref{EQ:ccbound}), (\ref{EQ:cbound}), 
and~(\ref{EQ:bbound}), and if we maximize $b_1c_1 + b_2c_2$ subject to
these conditions, we find that the maximum value occurs when $b_1 = 1$
and $c_1 = \phi^2/\Norm_{\Kp/\QQ}(\alpha)$.  Thus we have 
\begin{equation} 
\label{EQ:crosstermbound}
b_1c_1 + b_2c_2 \le 1\cdot \frac{\phi^2}{\Norm_{\Kp/\QQ}(\alpha)}
       + \frac{1}{4} \cdot \frac{(1/\phi^2)}{\Norm_{\Kp/\QQ}(\alpha)}
         \le \frac{2.72}{\Norm_{\Kp/\QQ}(\alpha)}. 
\end{equation} 
Let $\epsilon = 1/\Norm_{\Kp/\QQ}(\alpha)$.  Then by combining the 
relations~(\ref{EQ:gammaalphabound}), (\ref{EQ:bbbound}), 
(\ref{EQ:ccbound}) and~(\ref{EQ:crosstermbound}) we find that 
\[
\Norm_{\Kp/\QQ}(\gamma/\alpha) \le \epsilon^2  + 2.72 \, \epsilon + 1/4.
\]

If $\Norm_{\Kp/\QQ}(\alpha) \ge 4$ then $\epsilon \le 1/4$ and 
$\Norm_{\Kp/\QQ}(\gamma/\alpha) < 1$.  Then we can modify $P$ by 
\[
\left[\ \begin{matrix}
  0 & 1 \\
  1 & 0 \end{matrix}\ \right]
\]
to exchange $\alpha$ and $\gamma$, and this decreases the norm of the 
upper left element of $P$.

We repeat this procedure until we reach the point where
$\Norm_{\Kp/\QQ}(\alpha) \le 3$.  The only totally positive integer of
$\Kp$ with norm less than $4$ is $1$, so $\alpha=1$ and we can reduce
$\beta$ to be~$0$.  Then we find $\gamma = 1$, so that we have reduced 
$P$ to the identity matrix.
\end{proof}

Let us turn to the specific pairs $(q,N)$ that we must consider.  For
$q=11$ and $N=34$, the function \texttt{isogeny\us{}classes} in the
package \texttt{IsogenyClasses.magma} shows that a genus-$4$ curve $C$
over $\FF_{11}$ with $34$ points must have real Weil polynomial equal
to $(x^2 + 11x + 29)^2$; that is, the characteristic polynomial of
Frobenius plus Verschiebung is the square of this polynomial.  It 
follows that the characteristic polynomial of Frobenius is 
\[ (x^4 + 11 x^3 + 51 x^2 + 121 x + 121)^2, \]
which means that the Jacobian of $C$ is isogenous to the square of an
abelian surface $A$ with characteristic polynomial
$x^4 + 11 x^3 + 51 x^2 + 121 x + 121$.  Since the middle coefficient of
this characteristic polynomial is coprime to $q = 11$, we see that $A$
is ordinary.  Furthermore, the polynomial has a root $\pi$ in 
$\QQ(\zeta_5)$, namely $\pi = \zeta_5^2 + 2 \zeta_5 - 2$.  One checks
that $\pi$ and its complex conjugate generate the ring $\ZZ[\zeta_5]$,
so the center of $\End J$, which contains Frobenius and Verschiebung,
must equal $\ZZ[\zeta_5]$.  As we have seen, this is impossible.

For $q = 61$ and $N=120$, the function \texttt{isogeny\us{}classes}
tells us that if a genus-$4$ curve $C$ over $\FF_{61}$ with $120$
points is not a double cover of an elliptic curve of trace $-13$, then
it must have real Weil polynomial equal to $(x^2 + 29 x + 209)^2$.  As
above, we see that this implies that $C$ is ordinary and the center of
the endomorphism ring of its Jacobian is $\ZZ[\zeta_5]$, which is 
impossible.

\begin{remark}
\label{R:simpler}
For our particular cases $(q,N) = (11, 34)$ and $(q,N) = (61,120)$,
there is also a more computational approach to showing that there is no
genus-$4$ curve $C$ over $\FF_q$ with $N$ points.  As we have seen, we
may assume that there is a fifth root of unity in the center of the 
endomorphism ring of the Jacobian of $C$, and it follows that $C$ has
an automorphism of order~$5$.  Since the finite fields we are concerned
with contain the fifth roots of unity, this shows that $C$ is a
degree-$5$ Kummer extension of another curve, and by using the 
Riemann-Hurwitz formula we see that this second curve must be the 
projective line.  It is not hard to enumerate such Kummer covers; doing
so, we find no curves with $N$ points.
\end{remark}

\section{Lower bounds from explicit examples}
\label{S:lower}

In this section we prove the new lower bounds for $N_q(4)$ given in
Table~\ref{T:results} by providing examples of genus-$4$ curves with
many points.  Each line of Table~\ref{T:lower} gives a prime power~$q$,
an integer~$N$, and the equations for a genus-$4$ curve over $\FF_q$
having $N$ points.

\begin{table}
\renewcommand{\arraystretch}{1.25}
\begin{center}
\begin{tabular}{|r|r|lll|}
\hline
 $q$ &   $N$ & \multicolumn{3}{l|}{Equations for a genus-$4$ curve over $\FF_q$ with $N$ points} \\
\hline
$13$ &  $38$ & $y^2 = x^3 + 4$                        && $z^2 = x^3 + x^2 - 4 x - 3$             \\
$17$ &  $46$ & $y^2 = x^3 + x + 8$                    && $z^2 = x^3 - 5 x^2 - 2 x - 8$           \\
$19$ &  $48$ & $y^2 = x^3 + 8$                        && $z^2 = x^3 - 9 x^2 - 5$                 \\
$23$ &  $57$ & $y^2 = x^3 - 6 x^2 - 3 x - 7$          && $z^2 = x y + 2 x^3 - 11 x^2 + 7 x + 1$  \\
$29$ &  $67$ & $y^2 = x^3 - 3 x + 18$                 && $z^2 = y + 7 x^3 + 6 x^2 + 2 x - 10$    \\
$31$ &  $72$ & $y^2 = x^3 + x + 10$                   && $z^2 = x^3 + 7 x^2 - 13 x - 13$         \\
$37$ &  $82$ & $y^2 = x^3 + 2 x$                      && $z^2 = x^3 + x^2 - 10 x - 13$           \\
$41$ &  $88$ & $y^2 = x^3 + 6 x + 5$                  && $z^2 = 3 x^3 - 17 x^2 - 11 x$           \\
$43$ &  $92$ & $y^2 = x^3 + 2 x + 1$                  && $z^2 = x^3 - 6 x^2 + 11 x$              \\
$47$ &  $98$ & $y^2 = x^5 - 6 x^3 + 8 x^2 - 5 x + 12$ && $z^2 = y + 6 x^3 + 6 x^2 - x - 3$       \\
$49$ & $102$ & $y^2 = x^3 + x$                        && $z^2 = x^3 + x^2 + x + 4$               \\
$53$ & $108$ & $y^2 = x^3 + 4 x + 10$                 && $z^2 = x^3 + 12 x^2 + 17 x + 9$         \\
$59$ & $116$ & $y^2 = x^3 + 2 x + 22$                 && $z^2 = 2 x^3 + x^2 - x + 9$             \\
$61$ & $118$ & $y^2 = x^3 + 4$                        && $z^2 = x^3 + 23 x^2 + 25 x + 36$        \\
$67$ & $129$ & $y^2 = x^3 + 25$                       && $z^2 = x y - 8 x^3 - 27 x + 4$          \\
$71$ & $134$ & $y^2 = x^3 + x + 9$                    && $z^2 = x^3 + 9 x^2 + 24 x - 9$          \\
$73$ & $138$ & $y^2 = x^3 + 3 x + 11$                 && $z^2 = x^3 + 34 x^2 + 18 x + 40$        \\
$79$ & $148$ & $y^2 = x^3 + x + 6$                    && $z^3 = y + 33 x + 2$                    \\
$83$ & $152$ & $y^2 = x^3 + 2 x + 19$                 && $z^2 = x^3 + 38 x^2 - 6 x + 39$         \\
$89$ & $160$ & $y^2 = x^3 + 3 x$                      && $z^2 = x^3 + 13 x^2 - 22 x + 28$        \\
$97$ & $174$ & $y^2 = x^3 + 5 x + 26$                 && $z^3 = y + 37 x + 16$                   \\
\hline
\end{tabular}
\end{center}
\vskip0.5em
\caption{Genus-$4$ curves over small finite fields with many points.
         Note that for $q=79$ and $q=97$ the exponent on $z$ is $3$, not~$2$.}
\label{T:lower}
\end{table}

Almost all of the examples in Table~\ref{T:lower} were obtained by 
running our program \texttt{double\us{}cover\us{}given\us{}trace}.  The
exceptions are the example for $q=47$, which was found by running 
\texttt{double\us{}cover\us{}genus\us{}4} on a carefully chosen 
genus-$2$ curve, and the examples for $q=79$ and $q=97$, which were 
found during a search of degree-$3$ Kummer covers of elliptic curves.
While searching for such Kummer covers, we also found a particularly 
nice example for $q = 67$:  the curve $y^6 = x^3 + x - 6$ attains 
$N_{67}(4)$.

The function \texttt{check\us{}examples} in the file 
\texttt{Genus4.magma} verifies that all of these examples do have the
number of points claimed.






\begin{bibdiv}
\begin{biblist}

\bib{BorevichFaddeev1965}{article}{
   author={Borevi{\v{c}}, Z. I.},
   author={Faddeev, D. K.},
   title={Representations of orders with cyclic index},
   journal={Trudy Mat. Inst. Steklov},
   volume={80},
   date={1965},
   pages={51--65},
   translation = {
      title = {Proceedings of the Steklov Institute of Mathematics. 
              No. 80 (1965): Algebraic number theory and representations},
      editor = {Faddeev, D. K.},
      translator = {Bhanu Murthy, T. S.},
      publisher = {American Mathematical Society}, 
      place = {Providence, R.I.},
      date = {1968},
   },
}

\bib{magma}{article}{
   author={Bosma, Wieb},
   author={Cannon, John},
   author={Playoust, Catherine},
   title={The Magma algebra system. I. The user language},
   journal={J. Symbolic Comput.},
   volume={24},
   date={1997},
   number={3-4},
   pages={235--265},
   issn={0747-7171},
   note={Computational algebra and number theory (London, 1993).
         \href{http://dx.doi.org/10.1006/jsco.1996.0125}
                           {DOI: 10.1006/jsco.1996.0125}},
}

\bib{Deligne1969}{article}{
   author={Deligne, Pierre},
   title={Vari\'et\'es ab\'eliennes ordinaires sur un corps fini},
   journal={Invent. Math.},
   volume={8},
   date={1969},
   pages={238--243},
   issn={0020-9910},
   note={\href{http://dx.doi.org/10.1007/BF01406076}
                           {DOI: 10.1007/BF01406076}},
}

\bib{Deuring1941}{article}{
   author={Deuring, Max},
   title={Die Typen der Multiplikatorenringe elliptischer
   Funktionenk\"orper},
   journal={Abh. Math. Sem. Hansischen Univ.},
   volume={14},
   date={1941},
   pages={197--272},
}

\bib{GuralnickHowe2009}{article}{
   author={Guralnick, Robert M.},
   author={Howe, Everett W.},
   title={Characteristic polynomials of automorphisms of hyperelliptic
   curves},
   conference={
      title={Arithmetic, geometry, cryptography and coding theory},
   },
   book={
      series={Contemp. Math.},
      volume={487},
      publisher={Amer. Math. Soc.},
      place={Providence, RI},
   },
   date={2009},
   pages={101--111},
}

\bib{Hasse1936}{article}{
   author = {Hasse, H.},
   title = {Zur Theorie der abstrakten elliptischen Funktionk\"orper},
   partial = {
     part = {I},
     subtitle = {Die Struktur der Gruppe der Divisorenklassen endlicher Ordnung},
     journal = {J. Reine Angew. Math.},
     volume = {175},
     date = {1936},
     pages = {55--62},
   },
   partial = {
     part = {II},
     subtitle = {Automorphismen und Meromorphismen. Das Additionstheorem},
     journal = {J. Reine Angew. Math.},
     volume = {175},
     date = {1936},
     pages = {69--88},
   },
   partial = {
     part = {III},
     subtitle = {Die Struktur des Meromorphismenringes. Die Riemannsche Vermutung},
     journal = {J. Reine Angew. Math.},
     volume = {175},
     date = {1936},
     pages = {193--208},
   },
   note = {\href{http://dx.doi.org/10.1515/crll.1936.175.55}
                             {DOI: 10.1515/crll.1936.175.55},
           \href{http://dx.doi.org/10.1515/crll.1936.175.69}
                                  {10.1515/crll.1936.175.69},
           \href{http://dx.doi.org/10.1515/crll.1936.175.193}
                                  {10.1515/crll.1936.175.193}
          },
}

\bib{Howe1995}{article}{
   author={Howe, Everett W.},
   title={Principally polarized ordinary abelian varieties over finite
   fields},
   journal={Trans. Amer. Math. Soc.},
   volume={347},
   date={1995},
   pages={2361--2401},
   issn={0002-9947},
   note={\href{http://dx.doi.org/10.2307/2154828}
                           {DOI: 10.2307/2154828}},
}

\bib{HoweLauter2003}{article}{
   author={Howe, E. W.},
   author={Lauter, K. E.},
   title={Improved upper bounds for the number of points on curves over
   finite fields},
   journal={Ann. Inst. Fourier (Grenoble)},
   volume={53},
   date={2003},
   number={6},
   pages={1677--1737},
   note={\href{http://dx.doi.org/10.5802/aif.1990}
                           {DOI: 10.5802/aif.1990};
         Corrigendum, {\bf 57} (2007), no. 3, 1019--1021.
         \href{http://dx.doi.org/10.5802/aif.2284}
                           {DOI: 10.5802/aif.2284}},
}

\bib{HoweLauter2012}{article}{
   author={Howe, Everett W.},
   author={Lauter, Kristin E.},
   title={New methods for bounding the number of points 
          on curves over finite fields},
   conference={
      title={Geometry and Arithmetic},
      address={Schiermonnikoog},
      date={2010},
   },
   book={
      publisher={European Mathematical Society},
      editor={C. Faber},
      editor={G. Farkas},
      editor={R. de Jong},
   },
   note={To appear.  Preprint version:
   \href{http://arxiv.org/abs/1202.6308v2}{arXiv:1202.6308v2 [math.NT]}},
}

\bib{HoweNartRitzenthaler2009}{article}{
   author={Howe, Everett W.},
   author={Nart, Enric},
   author={Ritzenthaler, Christophe},
   title={Jacobians in isogeny classes of abelian surfaces over finite
   fields},
   journal={Ann. Inst. Fourier (Grenoble)},
   volume={59},
   date={2009},
   number={1},
   pages={239--289},
   note={\href{http://dx.doi.org/10.5802/aif.2430}
                           {DOI: 10.5802/aif.2430}},
}

\bib{Ihara1981}{article}{
   author={Ihara, Yasutaka},
   title={Some remarks on the number of rational points of algebraic curves
          over finite fields},
   journal={J. Fac. Sci. Univ. Tokyo Sect. IA Math.},
   volume={28},
   date={1981},
   number={3},
   pages={721--724 (1982)},
   note={\url{http://hdl.handle.net/2261/6319}},
}

\bib{LRZ}{article}{
   author={Lachaud, Gilles},
   author={Ritzenthaler, Christophe},
   author={Zykin, Alexey},
   title={Jacobians among abelian threefolds: a formula of Klein and a
   question of Serre},
   journal={Math. Res. Lett.},
   volume={17},
   date={2010},
   number={2},
   pages={323--333},
   note = {\url{http://www.mrlonline.org/mrl/2010-017-002/2010-017-002-011.html}},
}

\bib{LauterSerre2002}{article}{
   author={Lauter, Kristin},
   title={The maximum or minimum number of rational points on genus three
   curves over finite fields},
   journal={Compositio Math.},
   volume={134},
   date={2002},
   number={1},
   pages={87--111},
   note={With an appendix by Jean-Pierre Serre.
   \href{http://dx.doi.org/10.1023/A:1020246226326}
              {DOI:10.1023/A:1020246226326}},
}

\bib{Manin1981}{article}{
   author={Manin, Yu. I.},
   title={What is the maximum number of points on a curve over $\FF_2$?},
   journal={J. Fac. Sci. Univ. Tokyo Sect. IA Math.},
   volume={28},
   date={1981},
   number={3},
   pages={715--720 (1982)},
   note = {\url{http://hdl.handle.net/2261/6318}},
}

\bib{Milne1986}{article}{
   author={Milne, J. S.},
   title={Jacobian varieties},
   conference={
      title={Arithmetic geometry},
      address={Storrs, Conn.},
      date={1984},
   },
   book={
      publisher={Springer},
      place={New York},
   },
   date={1986},
   pages={167--212},
   note={\url{http://jmilne.org/math/articles/index.html\#1986c}},
}

\bib{Schiemann1998}{article}{
   author={Schiemann, Alexander},
   title={Classification of Hermitian forms with the neighbour method},
   journal={J. Symbolic Comput.},
   volume={26},
   date={1998},
   number={4},
   pages={487--508},
   note={\href{http://dx.doi.org/10.1006/jsco.1998.0225}
                           {DOI: 10.1006/jsco.1998.0225},
              online tables available at
              \url{http://www.math.uni-sb.de/ag/schulze/Hermitian-lattices/}},
}

\bib{Serre1983a}{article}{
   author={Serre, Jean-Pierre},
   title={Sur le nombre des points rationnels d'une courbe alg\'ebrique sur
   un corps fini},
   journal={C. R. Acad. Sci. Paris S\'er. I Math.},
   volume={296},
   date={1983},
   number={9},
   pages={397--402},
   note = {= \OE{}uvres [128]. \url{http://gallica.bnf.fr/ark:/12148/bpt6k55351747/f35} },
}

\bib{Serre1983b}{article}{
   author={Serre, Jean-Pierre},
   title={Nombres de points des courbes alg\'ebriques sur ${\bf F}_{q}$},
   conference={
      title={Seminar on number theory, 1982--1983},
      address={Talence},
      date={1982/1983},
   },
   book={
      publisher={Univ. Bordeaux I},
      place={Talence},
   },
   date={1983},
   pages={Exp. No. 22, 8},
   note = { = \OE{}uvres [129]},
}

\bib{Serre1984}{article}{
   author = {Serre, Jean-Pierre},
   title = {R\'esum\'e des cours de 1983--1984},
   journal = {Ann. Coll\`ege France},
   date = {1984},
   pages = {79--83},
   note = {=\OE{}uvres [132]},
}   
         
\bib{DrinfeldVladut1983}{article}{
   author={Vl{\`e}duts, S. G.},
   author={Drinfel{\cprime}d, V. G.},
   title={The number of points of an algebraic curve},
   journal={Funktsional. Anal. i Prilozhen.},
   volume={17},
   date={1983},
   number={1},
   pages={68--69},
   translation={
     journal={Functional Anal. Appl.},
     volume={17},
     date={1983},
     number={1},
     pages={53--54},
    },
   note={\href{http://dx.doi.org/10.1007/BF01083182}
                           {DOI: 10.1007/BF01083182}},
}

\bib{Waterhouse1969}{article}{
   author={Waterhouse, William C.},
   title={Abelian varieties over finite fields},
   journal={Ann. Sci. \'Ecole Norm. Sup. (4)},
   volume={2},
   date={1969},
   pages={521--560},
   note = {\url{http://www.numdam.org/item?id=ASENS_1969_4_2_4_521_0}},
}

\bib{Weil1940}{article}{
   author={Weil, Andr{\'e}},
   title={Sur les fonctions alg\'ebriques \`a corps de constantes fini},
   journal={C. R. Acad. Sci. Paris},
   volume={210},
   date={1940},
   pages={592--594},
   note = {\url{http://gallica.bnf.fr/ark:/12148/bpt6k31623/f592}},
}

\bib{Weil1941}{article}{
   author={Weil, Andr{\'e}},
   title={On the Riemann hypothesis in function-fields},
   journal={Proc. Nat. Acad. Sci. U. S. A.},
   volume={27},
   date={1941},
   pages={345--347},
   note = {\url{http://www.pnas.org/content/27/7/345.short}},
}

\bib{Weil1945}{book}{
   author={Weil, Andr{\'e}},
   title={Sur les courbes alg\'ebriques et les vari\'et\'es qui s'en
   d\'eduisent},
   series={Actualit\'es Sci. Ind., no. 1041 = Publ. Inst. Math. Univ.
   Strasbourg {\bf 7} (1945)},
   publisher={Hermann et Cie., Paris},
   date={1948},
}

\bib{Weil1946}{book}{
   author={Weil, Andr{\'e}},
   title={Vari\'et\'es ab\'eliennes et courbes alg\'ebriques},
   series={Actualit\'es Sci. Ind., no. 1064 = Publ. Inst. Math. Univ.
   Strasbourg 8 (1946)},
   publisher={Hermann \& Cie., Paris},
   date={1948},
   pages={165},
}

\end{biblist}
\end{bibdiv}

\end{document}